\newcommand{%
    
    \import{/Users/hadrianquan/Desktop/}{.pdf_tex}
}[1]{%
    
    \import{/Users/hadrianquan/Desktop/}{#1.pdf_tex}
}
\DeclareMathAlphabet{\mathboondoxfrak}{U}{BOONDOX-frak}{m}{n}
\newlength{\LETTERheight}
\newtheorem{theorem}{Theorem}
\newtheorem{corollary}[theorem]{Corollary}
\newtheorem{lemma}[theorem]{Lemma}
\newtheorem{proposition}[theorem]{Proposition}
\newtheorem{remark}{Remark}
\numberwithin{equation}{section}
\numberwithin{theorem}{section}
\newcommand{\bhs}[1]{\mathfrak B_{#1}}
\renewcommand{\bar}{\overline}
\renewcommand{\Re}{\operatorname{Re}}
\renewcommand{\hat}[1]{\widehat{#1}}
\newcommand{\wt}[1]{\widetilde{#1}}
\newcommand{\wf}{\text{WF}}
\newcommand{\wfp}{\text{WF}^\prime}
\newcommand{\Rp}{\mathbb{R}^+}
\newcommand\pa{\partial}
\newcommand\eps\varepsilon
\renewcommand\epsilon\varepsilon
\newcommand\CI{{\mathcal{C}}^{\infty}}
\newcommand\diag{\operatorname{diag}}
\newcommand\ev{\operatorname{even}}
\newcommand\Id{\operatorname{Id}}
\renewcommand\Im{\operatorname{Im}}
\newcommand\Ker{\operatorname{Ker}}
\renewcommand\Re{\operatorname{Re}}
\newcommand\Tr{\operatorname{Tr}}
\newcommand\Vol{\operatorname{Vol}}
\newcommand\paperintro%
\newcommand\paperbody%
\newcommand\bbC{\mathbb{C}}
\newcommand\bbH{\mathbb{H}}
\newcommand\bbN{\mathbb{N}}
\newcommand\bbR{\mathbb{R}}
\newcommand\bbS{\mathbb{S}}
\newcommand\bbX{\mathbb{X}}
\newcommand\cD{\mathcal{D}}
\newcommand\cE{\mathcal{E}}
\newcommand\cK{\mathcal{K}}
\newcommand\cO{\mathcal{O}}
\newcommand\cR{\mathcal{R}}
\newcommand\cV{\mathcal{V}}
\newcommand\sH{\mathscr{H}}
\newcommand\sV{\mathscr{V}}
\DeclareMathAlphabet{\mathpzc}{OT1}{pzc}{m}{it}
\title[The Wave Kernel on ACH Manifolds]{The Wave Kernel on Asymptotically Complex Hyperbolic Manifolds}
\author{Hadrian Quan}
\begin{document}

\maketitle

\begin{abstract}
We study the behavior of the wave kernel of the Laplacian on asymptotically complex hyperbolic manifolds for finite times. We show that the wave kernel on such manifolds belongs to an appropriate class of Fourier integral operators and analyze its trace. This construction proves that the singularities of its trace are contained in the set of lengths of closed geodesics and we obtain an asymptotic expansion for the trace at time zero. 
\end{abstract}

\section{Introduction}

There is a long-standing research program investigating the spectral and scattering theory of \emph{real} asymptotically hyperbolic manifolds, see e.g. \cite{And, Ale-Maz, Ale-Ba-Na, CDLS, Feff-Gra, Gra-Wit, Gra-Zwo, Jo-Sa1, Va} and references contained therein, for a small sample of the surrounding work. However there is comparatively much less work concerning the analogous setting of \emph{asymptotically complex hyperbolic manifolds}. 
These spaces were first introduced by Epstein, Mendoza, and Melrose \cite{EMM}, and more recently have been investigated extensively by \cite{Gu-SaBr,Feff-Hi, Gu-SaBr, HMM, PHT, Yo1, Yo2}. This class of manifolds includes certain quotients of complex hyperbolic space by discrete groups, as well as strictly pseudoconvex domains in Stein manifolds equipped with K\"{a}hler metrics of Bergman type. 

In this work we extend major results which study the wave kernel of asymptotically real hyperbolic manifolds to this complex setting.  Joshi-S\'{a} Barreto \cite{Jo-Sa2} study the wave kernel by exhibiting this operator as an element of a certain algebra of Fourier integral operators which have been adapted to the geometry at infinity of this class of real asymptotically hyperbolic manifolds. In the case of both works, moving from the real to the complex case presents new difficulties to the analysis. On the other hand, the original methods of both Vasy and Joshi-S\'{a} Barreto are robust enough to permit an analysis of this class of manifolds of hyperbolic-type. 

Before introducing the structure of complex hyperbolic manifolds we briefly recall the geometry of real asymptotically hyperbolic manifolds. A non-compact Riemannian manifold $(M,g)$ of real dimension $(n+1)$ is called \emph{asymptotically hyperbolic} if it compactifies to a $\CI$ manifold $\bar M$ with compact boundary $\pa \bar M$, equipped with a boundary defining function $\rho$, and such that $\rho^2 g$ is a $\CI$ metric which is non-degenerate up to the boundary, and moreover that $|d\rho|_{\rho^2 g}^2\equiv 1$ at $\pa \bar M$. This name is due to the fact that the final hypothesis ensures that along any smooth curve in $\bar M\setminus \pa \bar M$ approaching a point in $\pa\bar M$, all sectional curvatures of $g$ approach $-1$, see e.g. \cite{MazMel}. 

As proven in \cite{Jo-Sa1}, these geometric hypotheses are equivalent to the existence of a product-type decomposition at infinity $M\sim [0,\eps)_\rho \times \pa M$, such that
\[ g = \frac{d\rho^2 + g_0(\rho)}{\rho^2} , \]
where $g_0(\rho)$ is a $\CI$ 1-parameter family of $\CI$ metrics on $\pa \bar M$. In this model, the boundary $\pa \bar M$ represents the geometric infinity of $\bar M$, analogous to the role played by the $\bbS^n$ at infinity in $\bbH_\bbR^{n+1}$. In particular the metric $\rho^2 g|_{\pa M}$ fixes a conformal representative of a metric on $\pa \bar M$. 

The spectrum of the Laplacian of such manifolds was first studied by \cite{MazMel}; they determined that it is comprised of finitely many $L^2$-eigenvalues $\sigma_{\text{pp}}(\Delta_g)\subset (0,\tfrac{(n+1)^2}{4})$ and the absolutely continuous spectrum $\sigma_{\text{ac}}(\Delta_g)=[\tfrac{(n+1)^2}{4},\infty)$. In particular, they prove that the resolvent
\[ R(\zeta)=(\Delta_g-\zeta(n+1-\zeta))^{-1},  \]
is well-defined as a bounded operator on $L_g^2(X)$ whenever $\Re(\zeta)>\tfrac{n+1}{2}$. Further they prove that $R(\zeta)$ has a meromorphic extension to $\bbC\setminus \tfrac12((n+1)-\bbN_0)$, as an operator $R(\zeta):\CI_0(X)\to \CI(X)$, and with only finite order poles (this extension is meromorphic on the whole complex plane assuming the metric is \emph{even} in the sense of \cite{Gui}). 

We now move to introducing the complex analogue of these spaces, and introduce our results. We say a non-compact Riemannian manifold $(X,g)$, of \emph{complex} dimension $(n+1)$, is an asymptotically complex hyperbolic manifold (hereafter ACH manifold) if the following holds. We assume $X$ compactifies to a $\CI$ manifold $\bar X$, compact with boundary, equipped  with a choice of boundary defining function $r$ (hereafter, a bdf). This is a smooth nonnegative function on $\bar X$ which such that 
\[ \bar X=\{r=0\}, \quad dr|_{\pa \bar X}\neq 0.  \]

We further assume the boundary admits: (1) a contact form $\theta\in \Omega^1(\pa\bar X)$ defined as satisfying $\theta\wedge (d\theta)^{n}\neq 0$; (2) an almost complex structure $J:\Ker \theta\to \Ker\theta$; such that $d\theta(\cdot,J\cdot)$ is a symmetric, positive-definite bilinear form on $\Ker\theta$. Then we say $(X,g)$ is an ACH manifold if there is a tubular neighborhood $\Phi: U\to \pa \bar X\times [0,\eps)_r$ of the boundary $\pa \bar X$ such that 
\begin{equation}
g \sim \Phi^*g_\theta \; \text{ as $r \to 0$}, \hspace{6mm} g_\theta = \left(\frac{4dr^2}{r^2}+\frac{d\theta(\cdot,J\cdot)}{r^2} + \frac{\theta^2}{r^4}\right) = \frac{4dr^2 + g_0(r)}{r^2} .\end{equation}
In particular, for another choice of boundary defining function, $\wt r$, we observe that $r^4 g|_{\pa \bar X}=e^{4 f}\theta$, for some $f\in \CI(\bar X)$. Denoting the conformal class of our contact structure by $[\theta]$ we can consider the boundary as being endowed with the structure of a \emph{conformal pseudohermitian manifold} $(\pa \bar X,[\theta], J)$. This is analogous to the natural conformal structure on $(\pa \bar M, [\rho^2 g])$ in the real hyperbolic case. 

Before continuing, we require an additional hypothesis, which is that $g$ is an \emph{even metric}; i.e., the dual metric $g^{-1}$ defined on $T^* \bar X$ has only even powers of $r$ in a Taylor expansion at $r=0$. This is automatic in the case of $\bbH_\bbC^{n+1}$, and necessary for the existence of a meromorphic continuation of the resolvent of $\Delta_g$ to all of $\bbC$, (in fact, the failure of this hypothesis implies the existence of at least one essential singularity in the continuation of the resolvent, see \cite{Gui}, \cite{Gu-SaBr}).

In the case that the metric of $(X,g)$ is even in the above sense, we can replace the smooth structure on this manifold with its \emph{even smooth structure}, denoted $X_{\ev}$. In this case the smooth structure on $X$ has been modified by declaring that only functions which are even in $r$ are smooth with respect to $X_{\ev}$. This change of the smooth structure permits us to define a square root of our original defining function, and guarantee that it is an element of $\CI(X_{\ev})$. Equivalently, the even smooth structure can be defined by declaring $X_{\ev}$ is a smooth manifold with boundary, with bdf $r^2$. Throughout we shall denote the square root of our bdf $\rho=r^2$.

Now we state our main results on the behavior of solutions to the wave equation for small times. This question can be approached by a study of the fundamental solution to the wave equation, as in the work of Joshi-S\'{a} Barreto \cite{Jo-Sa2} who studied the wave operator \\ $\cos\big(t\sqrt{\Delta_g-(n+1)^2/4}\big)$ in the setting of real asymptotically hyperbolic manifolds. This operator has Schwartz kernel $U(t,p,p^\prime)$ satisfying
\[ \begin{cases} \left(\pa_t^2+\Delta_g-\tfrac{(n+1)^2}{4}\right)U(t,p,p^\prime)=0 \\ U(0,p,p^\prime)=\delta(p,p^\prime), \quad \pa_t U(0,p,p^\prime)=0  \end{cases},  \]
and they prove that $\cos\big(t\sqrt{\Delta_g-(n+1)^2/4}\big)$ resides in an algebra of Fourier integral operators. Having shown this, they use the results of \cite{Du-Gu, Hor1,Hor3} to study its (regularized) trace. 

This construction of the wave group $U(t,p,p')$ as a Fourier integral operator was motivated by the analysis of the resolvent of a real asymptotically hyperbolic manifold initiated in \cite{MazMel}. Mazzeo-Melrose obtained their results by exhibiting the resolvent as an element of the ``large" calculus of zero pseudodifferential operators $\Psi_0^*(M)$; i.e., those pseudodifferential operators with Schwartz kernels constructed as distributions on the blown-up space $\bar M\times_0\bar M$, obtained by blowing up the intersection of the the corner $\pa \bar M\times \pa \bar M$ with the diagonal $\bar M_{\diag}\hookrightarrow \bar M\times \bar M$ in $\bar M\times \bar M$. The new boundary hypersurface resulting from this blow-up is called the \emph{front face}. (For an extended treatment on such blow-ups see \cite[\S 3]{MazMel}, \cite{Melrose:Corners}, and \cite{Grieser}) 

Along such lines \cite{Jo-Sa2} construct a class of zero Fourier integral operators as those operators whose Schwartz kernels, when lifted to $\bar M\times_0 \bar M$, have support away from the \emph{left} and \emph{right} boundary faces (i.e. the lifts of $\pa \bar M\times \bar M$ and $\bar M\times \pa \bar M$ respectively). This greatly simplifies the construction of this class of operators, as typically the corners formed by the intersections of the left face (resp. right) with the front face would need to be incorporated into the definition of the operators; requiring the support of the Schwartz kernels avoid such corners allows their contributions to be neglected. In particular, due to the finite speed of propagation for the wave equation, a distribution which is initially supported only on the front face (such as $U(t,p,p^\prime)$) will remain supported in the interior of the front face for all finite time. Thus \cite{Jo-Sa2} can construct a small time parametrix for the wave group while remaining entirely in this restricted calculus of zero Fourier integral operators. 

Following this strategy we begin with the notion of the $\Theta$-stretched product, $\bar X\times_\Theta \bar X$, which is the analogous blow-up of the double space $\bar X\times \bar X$ defining the class of $\Theta$-pseudodifferential operators $\Psi_\Theta^*(X)$ used in the study of the resolvent initiated by \cite{EMM}. With the appropriate definition of $\Theta$-Fourier integral operators, we can quickly conclude:

\begin{theorem}$ $\\
Let $G$ be the length functional on $T^*X$, (i.e. the dual metric). For each $t\in \bbR$, the graph of the time-$t$ flow-out of the diagonal in $T^*X\times T^*X$ by the Hamilton vector field $H_G$ is a canonical relation, denoted $C$. Furthermore, the wave group $U(t)$ is a $\Theta$-FIO with respect to this canonical relation.
\end{theorem}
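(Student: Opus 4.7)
The argument splits naturally into two parts: verifying that $C$ is a canonical relation compatible with the $\Theta$-cotangent structure at infinity, and constructing a parametrix for $U(t)$ within the $\Theta$-FIO calculus.

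That $C$ is a canonical relation is a standard consequence of symplectic geometry. The diagonal $\Delta \subset T^*X \times T^*X$ is Lagrangian with respect to the twisted symplectic form $\pi_1^*\omega - \pi_2^*\omega$, and flowing out by $H_G \oplus 0$ preserves this Lagrangian character since Hamilton flows are symplectomorphisms and $H_G(G)=0$. The ACH-specific content is that $C$ extends as a smooth Lagrangian submanifold of ${}^\Theta T^*\bar X \times {}^\Theta T^*\bar X$ up to the front face; this follows because $G$ is a smooth positive section of $S^2({}^\Theta T\bar X)$, so its Hamilton vector field extends smoothly to the $\Theta$-cotangent bundle and its flow is well-defined in the $\Theta$ sense for all time.

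For the second part I plan a Hadamard-type parametrix construction on the $\Theta$-stretched product. One first selects a phase function $\phi(t,p,p',\xi)$ locally parametrizing $C$, whose lift to $[-T,T]_t \times \bar X \times_\Theta \bar X$ is polyhomogeneous with the correct orders at the front face, and solves the eikonal equation $(\partial_t\phi)^2 = G(d_{p'}\phi)$ with initial data recovering the Schwartz kernel of $\Id$ at $t=0$; this is handled by the method of characteristics applied to $H_G$. One then constructs amplitudes $a_j$ iteratively, the principal symbol being a nonvanishing half-density on $C$ and each successive $a_j$ solving a transport equation along the bicharacteristics of $G$ in the appropriate $\Theta$-symbol class. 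Borel-summing $a \sim \sum_j a_j$ produces $\tilde U(t) \in I^*_\Theta(X \times X; C)$ that solves the wave equation to infinite order at $t=0$ and modulo a smoothing remainder $R(t)$ at positive times. Duhamel's principle combined with energy estimates then identifies $U(t) - \tilde U(t)$ as a smoothing operator, which yields the claim.

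The main obstacle will be verifying the $\Theta$-structure of the parametrix near the front face of $\bar X \times_\Theta \bar X$: the phase and amplitudes must lift polyhomogeneously with the correct indicial orders, which requires careful analysis of the interaction between the Heisenberg-type geometry at $\pa \bar X$ and the geodesic flow of $g_\theta$. Crucially, finite propagation speed combined with the completeness of $g$ confines the Schwartz kernel of $U(t)$ to a neighborhood of the diagonal disjoint from the lifts of $\pa \bar X \times \bar X$ and $\bar X \times \pa \bar X$, since these faces lie at infinite $g$-distance from the front face. This allows the entire construction to take place in the reduced $\Theta$-FIO calculus that ignores the left and right faces, exactly as in the real asymptotically hyperbolic case of Joshi-S\'{a} Barreto.
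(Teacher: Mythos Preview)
Your treatment of the canonical relation is sound and parallels the paper's Lemmas on liftable canonical transformations and $\Theta$-canonical flowouts. The parametrix construction, however, takes a genuinely different route from the paper, and as written it has a gap.

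You propose the classical Hadamard/Lax scheme: solve the eikonal equation for a phase, then iterate transport equations to build amplitudes $a_j$, Borel-sum, and remove the remainder by Duhamel plus energy estimates. This iteration improves the \emph{symbol order} $m$. The paper instead iterates in the \emph{front-face index} $s$: it takes as zeroth approximation an operator whose normal operator $N_{\hat p}(u_0)$ is the exact wave group of the model operator $N_{\hat p}(\Delta_g)$ on the fiber $\bhs{F}_{\hat p}\simeq\bbH_{\bbC}^{n+1}$, then uses the short exact sequence for the normal operator to gain one order of vanishing at $\bhs{F}$ at each step, and finally invokes H\"ormander's transverse-intersection calculus (not Duhamel) to kill the error that now vanishes to infinite order at $\bhs{F}$.

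The gap in your proposal is precisely the sentence you flag as ``the main obstacle'': you assert that the phase and amplitudes lift polyhomogeneously to $\bar X\times_\Theta \bar X$ with the correct indicial behavior at $\bhs{F}$, but nothing in the interior eikonal/transport machinery forces this. Solving transport equations along bicharacteristics of $G$ gives you control of the interior singularity, not of the Taylor expansion at the front face; the two filtrations (symbol order at the diagonal versus vanishing order at $\bhs{F}$) are independent, and your scheme only moves in the first. The paper's normal-operator iteration is exactly the mechanism that supplies the missing front-face expansion, by reducing each step to a model wave equation on the front-face fiber whose solution is already known to be an interior Lagrangian distribution. Without an analogous device---for instance, showing that the eikonal and transport equations themselves degenerate to the model equations at $\bhs{F}$ and that their solutions are smooth up to $\bhs{F}$---your construction does not establish membership in $I_\Theta^{-1/4,0}$, only in the interior class $I^{-1/4}$.
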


Once we know the wave group is a $\Theta$-Fourier integral operator, it is straightforward to use the results of \cite{Du-Gu, Hor3} to analyze the trace of $U(t,p,p^\prime)$. One subtlety is that the trace needs to replaced with a regularized trace, defined using a Hadamard regularization procedure using our choice of bdf $\rho$. Defining the cut-off wave trace, 
\[T_\eps(t) = \int_{\{\rho>\eps\}} U(t,p,p)\]
we obtain
\begin{proposition}\label{cutoff-PR}
There exists $\eps_0>0$ such that for all $\eps<\eps_0$, the singular support of $T_\eps$ is contained in the set of periods of closed geodesics of $X$.
\end{proposition}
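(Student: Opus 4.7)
The plan is to identify the singular support of $T_\eps$ via the wavefront-set calculus applied to the Schwartz kernel $U(t,p,p')$, whose structure is controlled by Theorem 1. Viewed as a distribution on $\bbR_t \times X \times X$, $U$ is a $\Theta$-FIO with canonical relation the graph of the time-$t$ flowout of the diagonal under $H_G$; consequently its wavefront set is contained in the twisted Lagrangian
\begin{equation*}
\Lambda = \lbrace (t, p, p', \tau, \xi, -\xi') : \Phi_t(p', \xi') = (p,\xi),\ \tau^2 = G(p', \xi') \rbrace,
\end{equation*}
where $\Phi_t$ is the Hamilton flow of the dual metric $G$, and $\tau \neq 0$ whenever $\xi' \neq 0$.

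Pulling back along the diagonal embedding $\iota : \bbR_t \times X \hookrightarrow \bbR_t \times X \times X$, $\iota(t,p) = (t,p,p)$, the standard wavefront calculus gives
\begin{equation*}
\wf(\iota^* U) \subset \lbrace (t, p, \tau, \xi - \xi') : \Phi_t(p, \xi') = (p,\xi),\ \tau^2 = G(p, \xi') \rbrace;
\end{equation*}
the pullback is admissible away from the zero section, at which the familiar $t = 0$ contribution arises. The indicator $\mathbf{1}_{\lbrace \rho > \eps \rbrace}(p)$, lifted to $\bbR_t \times X$, has wavefront set $\lbrace (t, p, 0, c\,d\rho(p)) : \rho(p) = \eps,\ c \neq 0 \rbrace$; since $\wf(\iota^*U)$ is concentrated on $\tau \neq 0$ off the zero section, these two wavefront sets contain no antipodal pair, and the product $\mathbf{1}_{\lbrace \rho > \eps \rbrace} \cdot \iota^*U$ is a well-defined distribution on $\bbR_t \times X$.

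Pushing forward along $\pi : \bbR_t \times X \to \bbR_t$, H\"ormander's theorem on wavefront sets of pushforwards yields that the singular support of $T_\eps$ is contained in
\begin{equation*}
\lbrace t \in \bbR : \exists\, (p, \xi) \in T^*X \setminus 0,\ \rho(p) > \eps,\ \Phi_t(p, \xi) = (p, \xi) \rbrace,
\end{equation*}
since only those covectors in $\wf(\mathbf{1}_{\lbrace\rho>\eps\rbrace} \cdot \iota^*U)$ with vanishing $X$-fibre component survive the pushforward, and vanishing of this component forces $\xi = \xi'$ in the description of $\wf(\iota^*U)$. The condition $\Phi_t(p,\xi) = (p,\xi)$ with $\xi \neq 0$ is precisely that $p$ lies on a closed geodesic of period $t$; choosing $\eps_0$ small enough for $\rho$ to be a valid boundary-defining function on the collar neighbourhood, the singular support of $T_\eps$ is therefore contained in the set of periods of closed geodesics.

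The main technical hurdle is justifying the pullback and pushforward operations in the $\Theta$-FIO framework rather than the classical smooth category, since the canonical relation lives naturally on the $\Theta$-stretched product $\bar X \times_\Theta \bar X$ rather than on $X \times X$. By finite propagation speed, however, for any finite $t$ the support of $U(t,\cdot,\cdot)$ is uniformly separated from the left and right boundary faces and lies in the interior of the front face, a region naturally identified with (a compactification of) the interior of $X \times X$. Once restricted there, the classical wavefront-set calculus applies verbatim and the argument above goes through.
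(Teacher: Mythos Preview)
Your wavefront-set calculus (pullback to the diagonal, product with the indicator, pushforward to $\bbR_t$) is the same route the paper takes in the paragraphs preceding its proof. The divergence is in what you do with $\eps_0$.

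Your criterion for $\eps_0$ --- small enough that $\rho$ is a valid boundary defining function on the collar --- is vacuous: if your argument were complete it would give the inclusion for \emph{every} $\eps>0$, since periods of closed geodesics meeting $\{\rho>\eps\}$ are automatically periods of closed geodesics of $X$. The paper's proof, by contrast, is devoted almost entirely to a geodesic-convexity statement that you omit. Writing the Hamilton vector field of the dual metric in the rescaled fiber coordinates $(\sigma,\mu_H,\mu_V)=(\rho\xi,\rho\eta_H,\rho^2\eta_V)$, one computes that along any geodesic in a collar $\{\rho<\eps_0\}$ the condition $\dot\rho=0$ forces $\ddot\rho<0$; hence $\rho$ has no interior local minimum along a geodesic in the collar, and no closed geodesic can enter $\{\rho<\eps_0\}$ at all. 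This is what fixes $\eps_0$, and it is what the paper actually needs downstream: it guarantees that the set of periods of closed geodesics through $\bar X_\eps$ is \emph{independent of $\eps$} for $\eps<\eps_0$, so that the singular support of $T_\eps$ stabilizes and one can pass to the renormalized trace.

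A smaller gap: the product with $\mathbf 1_{\{\rho>\eps\}}$ contributes to the wavefront set not only the union of the factors' wavefront sets but also their fiberwise sum over $\{\rho=\eps\}$. After pushforward this can in principle produce singularities at return times of geodesic \emph{loops} based on $\{\rho=\eps\}$ whose momentum changes only by a multiple of $d\rho$ --- and these are not periods of closed geodesics. You check that the product is well-defined (no antipodal pair) but then assert without justification that only closed-orbit periods survive the pushforward; you should at least flag this boundary contribution.
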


With this result in hand, after choosing a smooth cutoff $\chi(t)\in \CI_0(\bbR)$ supported away from the lengths of all non-zero periods of closed geodesics, and using the results of \cite{Hor1} we obtain a Duistermaat-Guillemin type result for the cutoff wave trace.

\begin{theorem}\label{wavetrace-simple}
There exists $\{\omega_k\}_{k\in \bbN_0}\subset \bbR$ such that the renormalized trace ${}^{R}\Tr U(t)$ satisfies,
\begin{equation*}
\int_\bbR {}^{R}\Tr U(t) \chi(t) e^{t\mu}  dt \sim \frac{1}{(2\pi)^{2n+2}} \sum_{k=0}^\infty \omega_k \mu^{2n+2-2k},  \end{equation*}
as $\mu\to 0$ and is rapidly decaying as $\mu\to -\infty$. The leading term, $\omega_0={}^{R}\Vol_g(X)$, is called the renormalized volume, and can be computed as
\begin{equation}\label{renorm-vol} 
{}^{R}\Vol_g(X) = \lim_{\eps\to 0} \left[ \int_{\{\rho>\eps\}} d\Vol_g - \sum_{j=-2n-2}^{-1} d_j \eps^j - d_0\log(1/\eps) \right] ,  \end{equation}
where $d_j$ are the unique real numbers such that this limit exists. 
\end{theorem}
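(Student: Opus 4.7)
The plan is to reduce Theorem~\ref{wavetrace-simple} to a standard Duistermaat--Guillemin/H\"ormander stationary-phase analysis at $t=0$, with the Hadamard renormalization in $\rho$ built in at each step. By Proposition~\ref{cutoff-PR}, the distribution ${}^R\Tr U(t)$ has singular support contained in the set of periods of closed geodesics. Since $\chi \in \CIc(\bbR)$ is supported away from all non-zero periods, only the singularity of ${}^R\Tr U(t)$ at $t = 0$ contributes to the asymptotic behavior of the integral, while the smooth portion of $\chi(t){}^R\Tr U(t)$ produces the rapidly decaying remainder in $\mu$.

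First I would exploit the $\Theta$-FIO description of $U(t)$ near $t=0$. There the canonical relation $C$ reduces to the identity relation on $T^*X$, so the Schwartz kernel of $U(t)$ lifts to the $\Theta$-stretched product $\bar X \times_\Theta \bar X$ as an oscillatory integral whose amplitude is polyhomogeneous at the front face. Restricting to the lifted diagonal and writing $T_\eps(t) = \int_{\{\rho > \eps\}} U(t,p,p)\,d\Vol_g$, the divergent behavior as $\eps \to 0$ is governed precisely by the polyhomogeneous expansion of the diagonal amplitude in powers of $\rho$ (with a possible $\log \rho$ term). This identifies the coefficients $d_j(t)$ of (\ref{renorm-vol}) as smooth functions of $t$ near $0$ and shows that ${}^R\Tr U(t)$ is a well-defined distribution whose singularity at $t=0$ has the same conormal type as in the closed-manifold Duistermaat--Guillemin setup.

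Next I would apply the stationary-phase results of \cite{Hor1, Du-Gu} to this localized renormalized trace. The phase is governed by the length functional $G$ and the symbol by the $\Theta$-principal symbol of $U(0)$; the combined fiber integration in $T^*X$ and the integral $\int \chi(t) e^{t\mu}\,dt$ then produce the classical asymptotic series $\frac{1}{(2\pi)^{2n+2}}\sum_k \omega_k \mu^{2n+2-2k}$ once the Hadamard limit is carried out termwise. To identify $\omega_0$, the principal symbol of $U(0)$ restricted to the diagonal is the canonical symplectic half-density on $T^*X$; its fiber integral over the cotangent fiber is, up to the prefactor $(2\pi)^{-(2n+2)}$, the Riemannian volume density $d\Vol_g$, whose Hadamard finite part in $\rho$ equals ${}^R\Vol_g(X)$ by the definition (\ref{renorm-vol}).

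The main obstacle I anticipate is rigorously commuting the Hadamard regularization with the oscillatory integral and the stationary-phase expansion. Concretely, one must verify that the divergent coefficients $d_j(t)$ are smooth in $t$ near $0$, that they may be subtracted termwise before applying H\"ormander's theorem, and that no scheme-dependent anomalies appear in $\omega_k$ beyond those already built into ${}^R\Tr$. This comes down to controlling the polyhomogeneous structure of the Schwartz kernel of $U(t)$ on $\bar X \times_\Theta \bar X$ near the front face --- precisely the task for which the $\Theta$-calculus adapted from \cite{EMM} is designed.
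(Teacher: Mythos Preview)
Your proposal is correct and follows essentially the same route as the paper: localize to $t=0$ via the singular-support result, invoke the $\Theta$-FIO structure of $U(t)$, and apply the local H\"ormander/Duistermaat--Guillemin argument, with the Hadamard finite part handled through the polyhomogeneous expansion at the front face. The only organizational difference is that the paper first establishes the asymptotic expansion for the cutoff trace $T_\eps(t)$ at fixed $\eps$ (with leading coefficient $\Vol_g(\bar X_\eps)$) and only afterward passes to the renormalized limit termwise, which cleanly sidesteps the commutation issue you flagged as the main obstacle.
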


Finally, we remark on the appearance of the renormalized volume in Theorem \eqref{wavetrace-simple}. In the real hyperbolic setting it is known that the renormalized volume is, in certain dimensions, independent on the choice of representative of the conformal infinity. Namely, for $(M^{n+1},g)$ a real asymptotically hyperbolic manifold, one can similarly define the renormalized volume as the finite part of the in the expansion of $\Vol_g(\{x\geq \eps\})$ as $\eps\to 0$, given a choice of bdf $x$. For $n$ odd, the real hyperbolic renormalized volume is independent of $h_0$, the choice of conformal representative. On the other hand, for $n$ even, we suddenly have the dependence of the renormalized volume on this choice of representative of $[h]$. This is result is the so-called holographic anomaly (see \cite{skenderis}) and motivates much of the interest of asymptotically hyperbolic manifolds in mathematical physics, for their connection with the anti deSitter/conformal field theory (AdS/CFT) correspondence. 

More concretely, the volume expansion of $(M^{n+1}, g)$ of an \emph{Einstein} asymptotically hyperbolic metric, for $n$ even, is given by
\begin{equation*}
\Vol_g(\{x\geq \eps\}) = V_{-n}\eps^{-n}+V_{-n+2}\eps^{-n+2}+\ldots + V_{-2}\eps^{-2} + V_0\log(1/\eps) + {}^{R}\Vol_g(M) +o(1) ,  
\end{equation*}
and \cite{Gra-Zwo} first made the connection of $V_0$ to Branson's $Q$-curvature \cite{Branson},
\[ V_0 = 2 c_{n/2} \int_{\pa M} Q,   \]
for $c_{n/2}$ a dimensional constant. In the ACH setting, the renormalized volume was first studied at this level of generality by Matsumoto in \cite{Yo1}. Our construction of the renormalized wave trace thus provides an alternate proof of Matsumoto's result, via formula \eqref{renorm-vol}. For a general ACH metric, \cite{Yo1} generalizes this result for an analogue of Bransons $Q$-curvature. From his result we obtain as a corollary that the constant $d_0$ in our Theorem \ref{wavetrace-simple}, is given by,
\[ d_0 = \frac{2(-1)^{n+1}}{n!^2(n+1)!} \int_{\pa X} Q_\theta^g \theta\wedge (d\theta)^n .  \]
This quantity is a global CR invariant of the boundary, thus leading to a pseudoconformal analogue of the holographic anomaly. Given these results there is strong connection between the renormalized volume of an ACH manifold and its spectrum. On the mathematical physics side there seems to be relatively scarce work on this complex analogue of the AdS/CFT correspondence. 

$ $\\
{\bf Funding}
This work was supported by the National Science Foundation grant numbers DGE-1746047, DMS-1440140, and DMS-1711325, growing out of conversations while the author was in residence at the Mathematical Sciences Research Institute in Berkeley, California during the Fall 2019 semester. 

$ $\\
{\bf Acknowledgements.} 
The author is very grateful to acknowledge the kind discussions and suggestions of Andras Vasy, Ant\^onio S\'a Barreto, and Pierre Albin.

\tableofcontents

\section{The geometry of asymptotically complex hyperbolic manifolds}\label{sec:geom}

Because the construction of our adapted FIO-calculus entails a finer understanding of the geometry of an asymptotically complex hyperbolic manifold, we briefly recall the geometry of the Bergman-type metric our manifold is endowed with. 

Let $(X,\pa X)$ be a non-compact manifold with closed boundary. We assume the boundary admits a contact form $\theta$ and an almost complex structure $J:\Ker(\theta)\to \Ker(\theta)$ (i.e., an endomorphism satisfying $J\circ J=-\text{Id}_{\Ker(\theta)}$) such that $d\theta(\cdot, J\cdot)$ is symmetric positive definite on $\Ker(\theta)$. We consider a metric $g_{ACH}$ of the following form: there is a boundary defining function $\rho$,
\[ \pa X=\{\rho=0\}, \quad d\rho\rvert_{\pa X}\neq 0  \]
such that in a collar neighborhood $\varphi: [0,1)_\rho \times \pa X_{\omega,z}  \to U$ it takes the form
\begin{equation}\label{Theta-metric} \varphi^*g_{ACH} = \frac{d\rho^2}{\rho^2} + \frac{ d\theta(\cdot, J\cdot) }{\rho^2} + \frac{\theta\otimes\theta}{\rho^4} + \rho Q_\rho = \frac{d\rho^2 + h(\rho,\omega,z, d\omega, dz)}{\rho^2} ,  \end{equation}
where $(\cD_\rho^\bbH)^*Q_\rho$ is a smooth section of $S^2(T^*X)\cap \Ker(\iota_{\pa_\rho})$. Here, $\cD_\rho^{\bbH}$ denotes the anisotropic dilation map
\begin{equation*}
	T_q\pa X = \sH_q \oplus \sV_q \ni (v_H, v_V) \xmapsto{\phantom{x}\cD_{\rho}^{\bbH}\phantom{x}} (\rho v_H, \rho^2v_V) \in \sH_q \oplus \sV_q = T_q\pa X,
\end{equation*}
with splitting induced by the choice of contact structure $(\pa X, \theta)$, (i.e., $\sH=\Ker \theta$).

We observe that for any other choice of defining function $\wt \rho$ we have
\[ \wt\rho^4 g\rvert_{\pa X} = e^{4\omega_0} \theta\otimes \theta , \text{ for some }\omega_0\in \CI(X),  \]
thus it is more natural to associate to $g_{ACH}$ a conformal class of 1-forms $[\Theta]$.  The boundary manifold equipped with the data of $(\pa X,\theta,J)$ is a \emph{closed pseudohermitian manifold}. The corresponding \emph{conformal pseudohermitian structure} $([\Theta],J)$ was called a $\Theta$-structure in \cite{EMM}.

This Riemannian metric structure describes a non-compact incomplete manifold whose metric is asymptotic to complex hyperbolic space $\bbH_{\bbC}^{n+1}$. A useful model of complex hyperbolic space $\bbH_\bbC^{n+1}$ is given by
\[ \bbH_\bbC^{n+1} = \{ \zeta\in \bbC^{n+1} : Q(\zeta,\zeta)>0 \}, \quad \text{where} \quad Q(\zeta,\zeta')=-\tfrac{i}{2}(\zeta_1-\zeta_1')-\tfrac12 \sum_{j>1} \zeta_j\bar{\zeta_j}'  \]
with boundary sphere equal to a compactification of the $(2n+1)$-dimensional Heisenberg group, 
\[ H_n:=\{ \zeta\in \bbC^{n+1}: Q(\zeta,\zeta)=0\} = \{ (\zeta_1,w)\in \bbC^{n+1}: \tfrac12|w|^2=\Im(\zeta_1)  \} \simeq \bbC^n \times \bbR. \]
This model of complex hyperbolic space realizes $\bbH_{\bbC}^{n+1} \simeq \Rp\times H_n$ 
with the coordinates 
\[ \rho(\zeta) = Q(\zeta,\zeta)^{1/2}, \;\; w \in \bbC^n, \;\; z=\Re(\zeta_1),  \]
foliating $\bbH_{\bbC}^{n+1}$ by a family of $H_n$-hypersurfaces. Writing $w=x+iy$, in these coordinates we can also write the contact form at the boundary as
\[ \theta_0 = dz + \sum_{j=1}^n y_jdx^j - x_jdy^j ,  \]
and the metric on complex hyperbolic is the Bergman metric,
\[  g_{\text{Berg}} = \frac{4d\rho^2 + 2|dw|^2}{\rho^2} + \frac{\theta_0^2}{\rho^4}.  \]

The Heisenberg group is a Lie group of dimension $2n+1$. In these coordinates the group law is given by
\[ (x,y,z)\cdot_H (x',y',z') = (x+x',y+y', z+z' + \Im[(x,y)\cdot (\overline{x',y'})] ), \]
which is abelian in the first $2n$ components. Its Lie algebra $\mathfrak{h}$ has a basis $\{ X_j,Y_j, Z \}$, which satisfies the non-trivial bracket relations: $[X_j,Y_j]=Z$ for all $j=1,\ldots, n$ and all brackets vanishing. This structure of a nilpotent Lie algebra gives an identification $H_n\to \mathfrak{h}$ of the form
\[  \bbC^n\times \bbR \; \ni \; ((x,y),z)  \mapsto \sum_{j=1}^n x_j X_j+y_jY_j +zZ \; \in\; T_{\{e\}}H_n , \]
after which the group law can be written via Lie algebra elements $W,W^\prime\in \mathfrak{h}$ as,
\[ W\cdot_H W' = \pi_{\Ker \theta_0}(W+W') + (\theta_0(W+W') - d\theta_0(W,W'))Z.  \]
It is a consequence the nilpotence of $H_n$ that the group law is a finite order polynomial in the Lie algebra elements, rather than the asymptotic series given by the Baker-Campbell-Hausdorff formula, (see e.g. \cite{eich}).  

Finally, we explain how the complexified hyperbolic space arises as a semi-direct product: there is parabolic dilation on $H_n$ (consistent with the bracket relations of the Lie algebra $\mathfrak{h}$) given by $\cD_\delta(x,y,z) = (\delta x,\delta y,\delta^2 z)$. The group law on the semidirect product $\bbH_{\bbC}^{n+1} \simeq \Rp\rtimes_{\cD_\delta} H_n$ is given as
\begin{equation}\label{LAlg-group-law} (\rho, W)\cdot_{\bbH_{\bbC}} (\rho',W') = (\rho\rho', W\cdot_{H} \cD_{\rho}(W')). \end{equation}
The geometric picture described above of complex hyperbolic space being foliated by a family of Heisenberg groups as level-set hypersurfaces of $\rho$ is compatible with this group law: an open set in $\{\rho = c\}\simeq H_n$ is related to the corresponding set in $\{\rho = c+\eps\}$ by pullback along $M_\eps$. 

Our reason for expressing the Lie group law of $\bbH_{\bbC}^{m+1}$ at the level of its Lie algebra is that the Lie algebra arises more naturally at the level of tangent spaces in our later analysis.\\

\section{The wave kernel on asymptotically complex hyperbolic manifolds}\label{sec:waveker}

In this section we begin the construction of a Fourier Integral Operator Calculus, which is adapted to the asymptotic geometry of the metric \eqref{Theta-metric}. Such a calculus will be comprised of operators whose Schwartz kernels have prescribed asymptotics on a manifold with corners, the $\Theta$-stretched product $X\times_\Theta X$ of \cite{EMM}. 

Analogously to the 0-blow up, Epstein-Mendoza-Melrose defined the $\Theta$-blow up of an ACH manifold; this will be very similar to the zero-blow up of an AH manifold. The biggest distinction being the blow-up at the front face is non-isotropic, reflecting the different asymptotics in $\rho$ of boundary vector fields (namely those vector fields whose $g_{ACH}$-duals span $d\theta(\cdot , J\cdot )$ vs $\theta\otimes \theta$).

Following \cite{EMM}, we next explain how we will modify the product $X\times X$ to construct our algebra of Fourier integral operators. We begin with the notion of the $\Theta$-vector fields $\cV_\Theta$:
\[ V\in \cV_\Theta \iff V\in \rho \cdot \CI(X; TX), \quad \wt\theta(V)\in \rho^2 \cdot \CI(X;TX), 
\]
where $\wt \theta\in \CI(X; TX)$ is any smooth extension of $\theta$ to all of $X$. It is shown in \cite[\S 1]{EMM} that this definition is dependent only on the choice of conformal class of $[\theta]$. This is partly because a representative of $[\theta]$ determines a local frame by requiring
\begin{equation}\label{ONF} \{ X_1,\ldots, X_n, Y_1,\ldots, Y_n \} \text{ is an orthonormal frame of } d\theta(\cdot, J\cdot), \hspace{4mm} \theta(Z)=1, \;\; \theta(\pa_\rho)=0 , \end{equation}
in which we can express
\[ \cV_\Theta = \text{span}_{\CI}\{ \rho\pa_\rho, \rho X_1,\ldots, \rho X_n, \rho Y_1,\ldots, \rho Y_n, \rho^2 Z\} ,  \]
and a different choice of bdf $\rho'$ produces a frame as in \eqref{ONF} associated to a contact form $ \theta'$ conformal to $\theta$. 

Given this $\CI(X)$-module, we can define the $\Theta$-tangent bundle ${}^{\Theta}TX$. This is a vector bundle over $X$, with a bundle map $\iota_\Theta: {}^{\Theta}TX\to TX$, which is an isomorphism over $X\setminus \pa X$ such that
\[ \CI(X; {}^{\Theta}TX) = \iota_\Theta^*(\cV_\Theta).  \]
Next, we construct the $\Theta$-stretched product of \cite[\S 8]{EMM}. Notice first that in the product $X\times X$, the boundary of the diagonal $\pa X_{\diag}\simeq \pa X$ is an embedded submanifold,
\[ \pa X_{\diag}\hookrightarrow \pa X\times \pa X\hookrightarrow X\times X  \]
and is a \emph{clean submanifold} in the sense of \cite{Du-Gu}, since it is an embedded submanifold of the corner, and thus all differentials of bdfs vanish at $\pa X_{\diag}$. 
The 1-form $\theta$ on $X$ defines a line subbundle 
\[ \sH^* \subset N_{X\times X}^*(\pa X_{\diag}) \]
spanned by 
\[  \pi_L^*\theta - \pi_R^*\theta, \] 
with $\pi_{(\cdot)}:X\times X\to X$ denoting the projection onto the left and right factors respectively.
\begin{figure}[H]\label{theta-blowup}
\includegraphics[scale=.45]{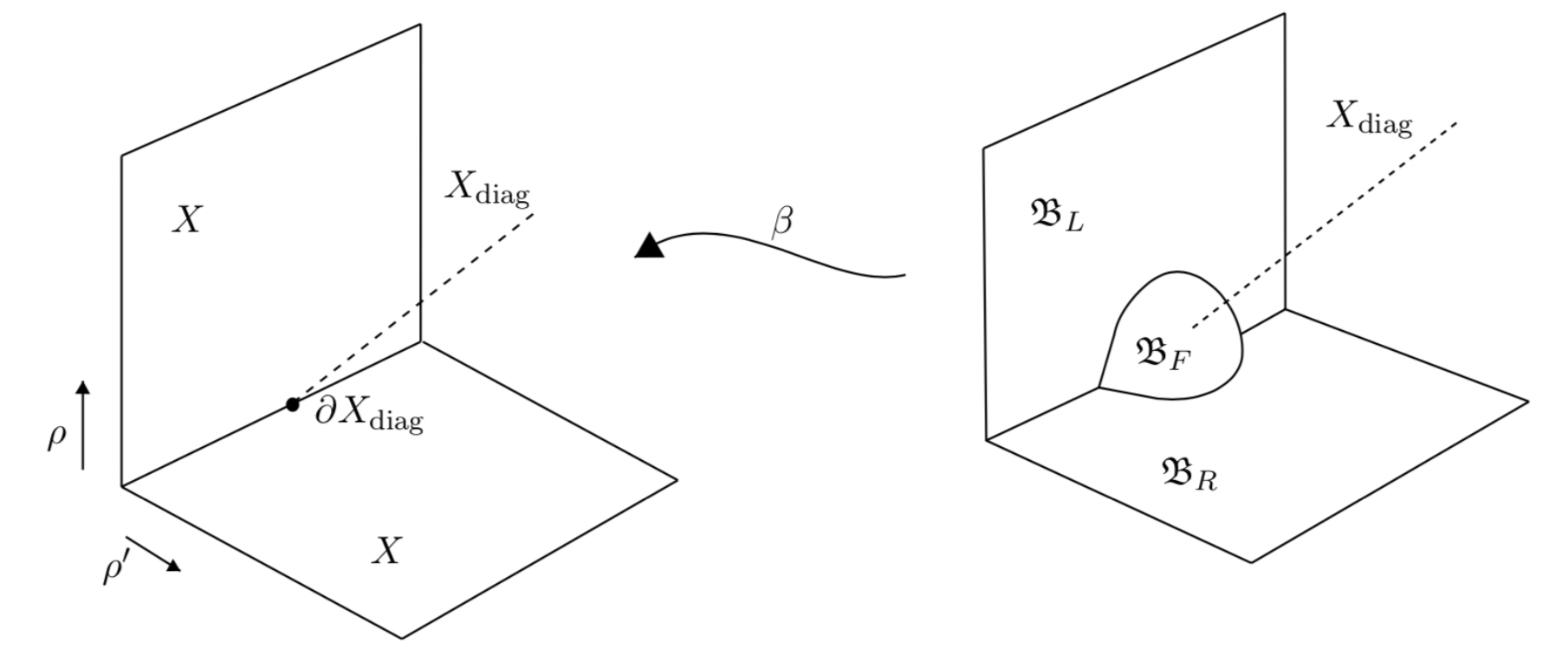}
\caption{The blow-down map $\beta$ of the $\Theta$-stretched product space $X\times_\Theta X$}
\end{figure}
With this trivialization of the conormal bundle, we define the $\Theta$-blow up of the corner as the $\sH^*$-parabolic blow-up (defined using the dilation structure on fibers given in \eqref{LAlg-group-law}) of the boundary diagonal:
\begin{equation*}
\begin{gathered} 
X\times_{\Theta} X = [ X\times X; \pa X_{\diag} , \sH^* ]  := (X\times X \setminus \pa X_{\diag}) \sqcup \bbS N_{\sH,+}(\pa X_{\diag})  \\
\bbS N_{\sH,+}(\pa X_{\diag}) = (N_{X\times X}\pa X_{\diag})/\Rp_{\sim_{\cD^\sH}}
\end{gathered}
\end{equation*}
where the equivalence on fibers $\cD^\sH$ is defined using the decomposition $ N(\pa X_{\diag})_+ = \sH \oplus \sH^\perp,$ with $\sH=\text{Ann}(\sH^*)$,
\[ (W, Z)\sim_{\cD_\delta^{\sH}} (W^\prime, Z^\prime)\iff \exists \delta>0, \; (W,Z) = ( \delta W^\prime, \delta^2 Z^\prime) .   \]
This real unoriented blow-up replaces the submanifold $\pa X_{\diag}$ with its inward-pointing \emph{parabolic}-sphere bundle. This blow-up procedure furnishes a blow-down map
\[ \beta: X\times_{\Theta} X\to X\times X,  \]
which is the identity on $X\times X \setminus \pa X_{\diag}$, and given by the bundle projection map of the parabolic-sphere bundle on $\bbS N_{\sH,+}(\pa X_{\diag})$. This is a manifold with corners, and has three new boundary faces:
\begin{equation*}
\begin{gathered}
\bhs{F} = \beta^{-1}(\pa X_{\diag}) = \bbS N_{\sH,+}(\pa X_{\diag})  \\
\bhs{L} = \overline{ \beta^{-1} \{(X\times \pa X)\setminus \pa X_{\diag}\} }, \quad \quad \bhs{R} = \overline{\beta^{-1}\{ (\pa X\times X)\setminus \pa X_{\diag} \} }. 
\end{gathered}
\end{equation*}
By construction, the front face $\bhs{F}$ is a fiber bundle over $\pa X_{\diag}$ with fiber a projective quotient of the inward pointing normal bundle $N_{X\times X}(\pa X_{\diag})_+$; the front face has fiber over $p\in \pa X$ given by
\begin{align*} 
\bhs{F}|_p = [N_{X\times X}(\pa X_{\diag})_+ & \setminus \pa X_{\diag}] / \sim_{\cD_\delta^\sH} . 
\end{align*}
For more details and the proof of diffeomorphism invariance of this construction see \cite[\S 5-7]{EMM}.

\subsection{The $\Theta$-symplectic structure on ${}^{\Theta}T^*X$}$ $

Similarly as in the \cite{Jo-Sa2}, associated to the Lie algebra $\cV_\Theta$ we can define the notion of a $\Theta$-Fourier integral operator, which will be operators whose Schwartz kernels have prescribed asymptotics on a resolution of the product $X\times X$, the $\Theta$-stretched product $X\times_\Theta X$. A standard Fourier integral operator is characterized by its Schwartz kernel having singular support conormal to a Lagrangian inside $(T^*X\setminus o)\times(T^*X\setminus o)$; to generalize this notion we must first understand how Lagrangians arise in the symplectic structure of ${}^{\Theta}T^*X$.

In a neighborhood of the boundary $U$, if we use coordinates $(x, \zeta)=((\rho,w,z); (\xi,\eta_H, \eta_V))\in {}^{\Theta}T^*X$, where $\pa X=\{\rho=0\}$, then 
\[ \theta|_{U\cap \{\rho=0\}} = dz - \tfrac12\sum_{j=1}^{n} w_{x_j}dw_{y_j} - w_{y_j}dw_{x_j}  \]
the induced map on the dual bundles is given by 
\begin{equation}\label{iota-bar-theta} 
\bar\iota_\Theta:T^*X\to {}^{\Theta}T^*X, \;\; ((\rho,w,z); (\xi,\eta_H,\eta_V))\mapsto ((\rho,w,z) ; (\rho\xi,\rho\eta_H,\rho^2\eta_V)) =: ((\rho,w,z);(\mu,u,t)).  
\end{equation}
In these coordinates the canonical 1-form
\[ \alpha = \xi d\rho + \eta_H\cdot dw + \eta_V dz   \]
pulls back  to the 1-form
\[
\bar\iota_\Theta \alpha = {}^{\Theta}\alpha = \frac{\mu}{\rho}d\rho + \frac{u}{\rho} dw + \frac{t}{\rho^2} dz, 
\]
and hence we have a symplectic form,
\begin{equation}\label{symp-form-coord}
{}^{\Theta}\omega = d({}^{\Theta}\alpha) = \tfrac{1}{\rho}d\mu\wedge d\rho+\tfrac{1}{\rho}du\wedge d\omega + \tfrac{1}{\rho^2} \left( dt\wedge dz  - d\rho \wedge(u d\omega) \right) -\tfrac{2}{\rho^3} d\rho\wedge (tdz). 
\end{equation}
With this symplectic structure on ${}^{\Theta}T^*X$ we can explore the many ways to create Lagrangian submanifolds on this rescaled bundle.

Following \cite{Jo-Sa2}, we can define extendible Lagrangian submanifolds. Set
\[ (X\times_\Theta X)_d=X\times_\Theta X \bigsqcup_{\bhs{F}} X\times_\Theta X,\]
the double of the $\Theta$-stretched product across the front face. We say that a smooth conic closed Lagrangian submanifold $\Lambda \subset T^*(X\times_\Theta X)$ is \textbf{extendible}, if it intersects $T^*(X\times_\Theta X)\rvert_{\bhs{F}}$ transversely. This implies there exists a smooth conic Lagrangian $\Lambda_{\emph{ext}}\subset T^*(X\times_\Theta X)_d$ such that
\[ \Lambda= \Lambda_{\emph{ext}}\cap T^*(X\times_\Theta X), \quad  \Lambda_\Theta: = \Lambda\pitchfork T^*(X\times_\Theta X)\rvert_{\bhs{F}}  \]

One reason for the interest in extendible Lagrangians is that their intersection with the cotangent bundle over the front face is again a Lagrangian submanifold. 
\begin{lemma}\label{lem:extlagr}
If $\Lambda\subset T^*(X\times_\Theta X)$ is extendible then $\Lambda_\Theta=\Lambda\cap T^*(X\times_\Theta X)\rvert_{\bhs{F}}$ is a Lagrangian submanifold of $T^*\bhs{F}$
\end{lemma}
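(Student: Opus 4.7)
The plan is to realize $T^*\bhs{F}$ as the symplectic reduction of the coisotropic hypersurface $C := T^*(X\times_\Theta X)_d\rvert_{\bhs{F}}$ inside the symplectic manifold $T^*(X\times_\Theta X)_d$, and then invoke the standard principle that a Lagrangian meeting a coisotropic transversely descends to a Lagrangian in the reduction. Passing to the double is convenient because $\bhs{F}$ becomes an honest interior hypersurface of a boundaryless manifold, on which the canonical symplectic form is globally defined.

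First I would set up the reduction explicitly. Let $r \in \CI((X\times_\Theta X)_d)$ be a local defining function of $\bhs{F}$, and let $\pi: T^*(X\times_\Theta X)_d \to (X\times_\Theta X)_d$ be the projection. Then $C = \{\pi^* r = 0\}$ is a coisotropic hypersurface, and its characteristic foliation is generated by the Hamilton vector field $H_{\pi^* r}$; in Darboux coordinates adapted to $r$ this is $\partial/\partial \xi_r$, so the leaves are precisely the fibers of translation in the conormal direction $N^*\bhs{F}$. The symplectic quotient $C/\!\sim$ is canonically symplectomorphic to $T^*\bhs{F}$, with reduction map $q: C \to T^*\bhs{F}$ given by pullback of covectors along $\iota: \bhs{F} \hookrightarrow (X\times_\Theta X)_d$.

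Next I would apply the extendibility hypothesis. Transversality of $\Lambda_{\text{ext}}$ with $C$ gives that $\Lambda_\Theta = \Lambda_{\text{ext}} \cap C$ is a smooth submanifold of dimension $\dim \Lambda_{\text{ext}} - 1 = \dim \bhs{F}$, the correct dimension for a Lagrangian in $T^*\bhs{F}$. Isotropy of the image $q(\Lambda_\Theta)$ is automatic: the canonical symplectic form $\omega$ vanishes on $\Lambda_\Theta \subset \Lambda_{\text{ext}}$ and descends under $q$ to the symplectic form on $T^*\bhs{F}$. The key technical point is that $q|_{\Lambda_\Theta}$ is an immersion. Suppose $v \in T_p \Lambda_\Theta$ is a nonzero multiple of $H_{\pi^* r}$. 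Then $\omega(v,\cdot)$ vanishes on $T_p C$ (since $v$ is characteristic) and on $T_p \Lambda_{\text{ext}}$ (since $\Lambda_{\text{ext}}$ is Lagrangian and $v \in T_p \Lambda_{\text{ext}}$). By transversality $T_p C + T_p \Lambda_{\text{ext}}$ is the entire tangent space, forcing $\omega(v,\cdot) \equiv 0$, hence $v = 0$ by nondegeneracy. Thus $q|_{\Lambda_\Theta}$ is an immersion onto an isotropic submanifold of $T^*\bhs{F}$ of half dimension, i.e.\ a Lagrangian.

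The main obstacle I anticipate is upgrading this immersed-Lagrangian conclusion to the embedded Lagrangian submanifold that the statement asserts. Locally this is immediate from the immersion property, but globally it can fail if two distinct points of $\Lambda_\Theta$ lying on the same characteristic leaf of $C$ collapse under $q$. For the canonical relations arising in the $\Theta$-Fourier integral operator calculus, closedness and conicness of $\Lambda_{\text{ext}}$ together with the finite-speed-of-propagation structure are expected to rule this out; a secondary sanity check is independence of the construction on the choices of defining function $r$ and of the extension $\Lambda_{\text{ext}}$, which follows from the coordinate-free description $q = \iota^*$.
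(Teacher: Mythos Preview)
Your symplectic-reduction argument is correct and takes a genuinely different, more conceptual route than the paper. The paper works directly in adapted local coordinates $(\rho,w,z;\xi,\eta_H,\eta_V)$ with $\bhs{F}=\{\rho=0\}$: from $\omega|_{\Lambda}=0$ together with $d\rho|_{\Lambda}\neq 0$ (transversality) it deduces that $d\xi$ restricted to $T\Lambda$ along $\Lambda_\Theta$ is a multiple of $d\rho$, whence $\Lambda\subset\{\xi=\rho\,\phi\}$ for some smooth $\phi$; in particular $\xi|_{\Lambda_\Theta}=0$, so $\Lambda_\Theta$ already sits in $\{\rho=\xi=0\}\cong T^*\bhs{F}$ and the reduced form $\sum dw_j\wedge d\eta_H^j+dz\wedge d\eta_V$ vanishes there. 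Your immersion step (no nonzero tangent vector to $\Lambda_\Theta$ can be characteristic, by transversality plus nondegeneracy of $\omega$) is precisely the invariant content of the paper's ``$d\xi\in\bbR\,d\rho$'' observation. What the paper's hands-on approach buys is the normal form $\xi=\rho\,\phi$: in each chart the reduction map is then the identity on $\Lambda_\Theta\subset\{\xi=0\}$, which addresses your embedding concern locally and feeds directly into the explicit coordinate description of $\Lambda_\chi$ in the next lemma. What your approach buys is that it is manifestly coordinate-free and applies verbatim to any Lagrangian meeting any coisotropic hypersurface transversely, with no reference to a boundary defining function.
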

\begin{proof}
Fix coordinates $(\rho,w_1,\ldots,w_{2n},z)$ of $X\times_\Theta X$ valid near $\bhs{F}=\{\rho=0\}$, and with dual variables $(\xi, \eta_H^1,\ldots, \eta_H^{2n}, \eta_V)$. Then $(\rho,w,z; \xi, \eta_H,\eta_V)$ give local coordinates for $T^*(X\times_\Theta X)$ near $\bhs{F}$. By transversality, $d\rho \rvert_{\Lambda}\neq 0$, thus $\rho$ and some subset of $(w,z; \xi, \eta_H,\eta_V)$ must give local coordinates for $\Lambda$. Since $\Lambda$ is Lagrangian, the canonical 2-form 
\[  \omega_{T^*(X\times_\Theta X)} = d\rho \wedge d\xi + \sum_{j=1}^{2n} dw^j\wedge d\eta_H^j +dz\wedge d\eta_V
\]
must vanish on $\Lambda$; hence it vanishes on $\Lambda_\Theta$ as well. From the overall vanishing of this symplectic form, and the non-vanishing of $d\rho$ on $\Lambda$, we must have that $d\xi$ restricted to $T\Lambda\rvert_{\Lambda_\Theta}$ is a multiple of $d\rho$. This implies existence of a function $\phi(\rho,w, z; \eta_H^j, \eta_V)$ satisfying
\[ \Lambda \subset \{\xi=\rho \phi(\rho,w, z; \eta_H^j, \eta_V)\}.  \]
In particular, $\xi |_{\Lambda_\Theta}=0$ and $\sum dw_j\wedge d\eta_H^j +dz\wedge d\eta_V=0$ on $T\Lambda_\Theta$. 
\end{proof}

Having introduced extendible Lagrangians we immediately explain their relation to our the class of distributions we will ultimately be concerned with. We define a Lagrangian distribution associated to an extendible Lagrangian, (either $\Lambda\subset T^*(X\times_\Theta X)$ or $\Lambda\subset T^*\bbR\times T^*(X\times_\Theta X)$), to be the restriction to $X\times_\Theta X$ of a distribution which is Lagrangian with respect to an extension $\Lambda_{ext}$ of $\Lambda$ across $\bhs{F}$. As usual we denote the set of order $m$ distributions which are Lagrangian with respect to $\Lambda$ by $I^m(X\times_\Theta X; \Lambda, {}^{\Theta}\Omega^{1/2})$ (resp. $I^m(\bbR\times X\times_\Theta X; \Lambda, {}^{\Theta}\Omega^{1/2})$).  \\

Now that we have introduced Lagrangians in this setting we can see some ways they arise naturally. If $X,Y$ are two ACH manifolds, a \textbf{$\Theta$-canonical relation} between them is a $\CI$-map 
\[ \chi: \Gamma\subset {}^{\Theta}T^*X \to {}^{\Theta}T^*Y\] 
defined on an open conic subset $\Gamma\subset {}^{\Theta}T^*X$ such that $\chi^* ({}^{\Theta}\alpha_Y) = {}^{\Theta}\alpha_X$. Certain $\Theta$-canonical relations will define Lagrangian submanifolds in $T^*(X\times_\Theta X)$, by associating to $\chi$ its graph relation
\[ \chi: {}^{\Theta}T^*X\to {}^{\Theta}T^*X \leftrightsquigarrow \text{Gr}(\chi) \subset {}^{\Theta}T^*X \times {}^{\Theta}T^* X \]
and we denote such Lagrangians by $\Lambda_\chi$. Particularly relevant Lagrangians will arise from \textbf{liftable canonical transformations}; these are homogeneous canonical transformations $\chi: {}^{\Theta}T^*X\to {}^{\Theta}T^*X$, whose projections to the base is the identity over $\pa X$. 

Using the left and right projections we can define a symplectic form on ${}^{\Theta}T^*X\times {}^{\Theta}T^*X$ by
\begin{equation}\label{difference form} \omega = \pi_1^*\omega_\Theta - \pi_2^*\omega_\Theta .  \end{equation}
Further, the dual to the differential of the blow-down map $\beta: X\times_\Theta X\to X\times X$ induces a smooth map 
\begin{equation}\label{ident} T^* X\times T^*X \simeq T^*(X\times X) \to T^*(X\times_\Theta X) \end{equation}
which is an isomorphism over $\text{Int}(X\times X)$ between $\omega$ and the standard symplectic form on $T^*X\times T^* X$. 

\begin{lemma}{\textbf{(Liftable Canonical Transformations induce Extendible Lagrangians)}}\label{lem:liftlagr} $ $ \\
Let $\chi: {}^{\Theta}T^*X \to {}^{\Theta}T^*X$ be a liftable canonical transformation. The map \eqref{ident}, combined with the identification (over $\text{Int}(X\times X)$) $T^*X\times T^*X\sim {}^{\Theta}T^*X\times {}^{\Theta}T^*X$  gives a smooth map 
\begin{equation}\label{double-ident}  
\varphi_\Theta: {}^{\Theta}T^*X\times {}^{\Theta}T^*X \xrightarrow{\simeq} T^*(X\times_\Theta X) \quad \text{over} \quad \emph{Int}(X\times X)  \end{equation}
which, restricted to the graph of $\chi$, extends by continuity to the boundary and embeds into it as a smooth Lagrangian of $T^*(X\times_\Theta X)$, denoted $\Lambda_\chi$. Further $\Lambda_\chi$ intersects the boundary of $T^*(X\times_\Theta X)$ only over $T^*_{\bhs{F}}(X\times_\Theta X)$, it is extendible across the front face, and this intersection
\[ \Lambda_{\chi_\Theta}:= \Lambda_\chi \cap T^*_{\bhs{F}}(X\times_\Theta X)  \]
defines a Lagrangian submanifold of $T^*\bhs{F}$.
\end{lemma}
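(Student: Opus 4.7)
The plan is to work in explicit projective coordinates on $X \times_\Theta X$ near the interior of $\bhs{F}$, exhibit the graph of $\chi$ as a smoothly parametrized submanifold of $T^*(X \times_\Theta X)$ up to the front face, and then reduce the final Lagrangian statement to the preceding Lemma \ref{lem:extlagr}.

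First, fix $p_0 \in \pa X$ and choose local coordinates $(\rho, w, z)$ on $X$ with $\pa X = \{\rho = 0\}$ and $\theta|_{\rho=0}$ given as in \eqref{iota-bar-theta}. Take product coordinates $(\rho, w, z; \rho', w', z')$ on $X \times X$ near $\pa X_{\diag}$, and cover the interior of $\bhs{F}$ on $X \times_\Theta X$ by projective coordinates of the form $\rho$, $R = \rho'/\rho$, $W = (w' - w)/\rho$, $Z = \{(z' - z) - \tfrac12\, w \cdot J(w' - w)\}/\rho^2$, where the quadratic correction implements the parabolic direction $\sH^*$ of the blow-up; then $\rho$ is a defining function of $\bhs{F}$. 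A direct computation then shows that when the $\Theta$-variables $(\mu, u, t) = (\rho\xi, \rho\eta_H, \rho^2 \eta_V)$ from \eqref{iota-bar-theta} are inserted into the difference form $\omega$ of \eqref{difference form}, the result agrees with the standard symplectic form on $T^*(X \times_\Theta X)$ written in coordinates dual to $(\rho, R, W, Z)$, so that $\varphi_\Theta$ extends to a symplectomorphism onto its image in a neighborhood of $\bhs{F}$.

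Next, I would invoke the liftability hypothesis. Writing $\chi(\rho, w, z; \mu, u, t) = (\rho', w', z'; \mu', u', t')$, smoothness of $\chi$ as a map of ${}^{\Theta}T^*X$ together with the identity $\pi \circ \chi = \pi$ on $\pi^{-1}(\pa X)$ forces, via Taylor expansion in $\rho$, the relations $\rho' = \rho(1 + \rho a_0)$, $w' - w = \rho a_1$, and $(z' - z) - \tfrac12 w \cdot J(w' - w) = \rho^2 a_2$, with $a_0, a_1, a_2$ smooth functions of the $\Theta$-variables. Hence the projective coordinates $(R, W, Z)$ remain bounded and smooth as $\rho \to 0$, and by a parallel expansion of $\chi$ in the fiber variables the primed cotangent coordinates $(\mu', u', t')$ are smooth functions of the unprimed ones. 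Consequently the image of the graph of $\chi$ under $\varphi_\Theta$ extends by continuity to a smooth submanifold $\Lambda_\chi \subset T^*(X \times_\Theta X)$. Moreover $\Lambda_\chi$ meets the boundary only over $\bhs{F}$: the faces $\bhs{L}, \bhs{R}$ are cut out by $R = 0$ or $R = \infty$, while $R \to 1$ along the graph.

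On the open stratum $\varphi_\Theta$ is a symplectomorphism intertwining $\omega$ with the standard form, so the graph of $\chi$ (which is Lagrangian for $\omega$ since $\chi$ is a canonical transformation) embeds as a smooth Lagrangian; its closure $\Lambda_\chi$ then remains Lagrangian by continuity of the symplectic form and the constant rank property. Transversality of $\Lambda_\chi$ to $T^*(X \times_\Theta X)|_{\bhs{F}}$ is immediate from the fact that $\rho$ restricts to a coordinate on $\Lambda_\chi$, so $d\rho|_{\Lambda_\chi} \neq 0$, which yields extendibility; applying Lemma \ref{lem:extlagr} to $\Lambda_\chi$ then produces the Lagrangian $\Lambda_{\chi_\Theta} \subset T^*\bhs{F}$. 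The principal obstacle is verifying, by explicit computation in the projective chart, that the anisotropic $\Theta$-rescaling $(\mu, u, t) = (\rho\xi, \rho\eta_H, \rho^2 \eta_V)$ combines with the Jacobian of the parabolic blow-up to produce exactly the standard symplectic form on $T^*(X \times_\Theta X)$ near $\bhs{F}$; this compatibility is what underpins both the smooth extension of $\varphi_\Theta$ and the Lagrangian property of $\Lambda_\chi$, and once it is pinned down the rest of the argument is essentially formal.
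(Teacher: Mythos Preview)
Your overall strategy matches the paper's: set up projective coordinates near $\bhs{F}$, use liftability to see that the graph of $\chi$ stays in this chart with smooth coordinates, and invoke Lemma~\ref{lem:extlagr} at the end. There is, however, one genuine misstep in how you frame the key computation.

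You claim that $\varphi_\Theta$ extends to a symplectomorphism onto its image in a neighborhood of $\bhs{F}$, and that verifying this compatibility is the ``principal obstacle.'' In fact $\varphi_\Theta$ does \emph{not} extend as a smooth map near $\bhs{F}$. The explicit coordinate change (which the paper carries out in full) shows that the $T^*(X\times_\Theta X)$ fiber variable dual to the bdf involves terms of the form $(\mu-\mu')/\rho'$, $(u/\rho - u'/\rho')$, $(t/\rho^2 - t'/(\rho')^2)$, all of which blow up in general as one approaches $\bhs{F}$. What saves the day is precisely the liftability hypothesis together with $\chi^*({}^{\Theta}\alpha)={}^{\Theta}\alpha$: only on $\text{Gr}(\chi)$ does one obtain relations like $\mu'=\mu-uA-2tB+\rho C$, $u'=fu+\rho D$, $t'=f^2t+\rho^2 E$, and these are exactly what render the $T^*(X\times_\Theta X)$ fiber coordinates smooth along the graph. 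So the correct statement is that $\varphi_\Theta|_{\text{Gr}(\chi)}$ extends by continuity, not $\varphi_\Theta$ itself; and verifying this requires the explicit computation you flagged but did not perform. Once that is in hand, your Lagrangian-by-continuity argument is fine, since $\varphi_\Theta$ is genuinely a symplectomorphism over the interior.

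A minor point: liftability only forces $\rho'=f\rho$ with $f>0$ smooth, not $\rho'=\rho(1+\rho a_0)$. The condition $\pi\circ\chi=\pi$ over $\pa X$ gives $\rho'|_{\rho=0}=0$ but does not normalize $\pa_\rho\rho'|_{\rho=0}=1$. This does not affect the argument, since $R=f$ is still smooth and bounded away from $0$ and $\infty$, so the graph still avoids $\bhs{L}$ and $\bhs{R}$.
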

\begin{proof}
On the two copies of $X$ in the product $X\times X$ we consider respectively coordinates $(\rho,w,z)$, and $(\rho',w',z')$ valid near the boundary. These induce corresponding local coordinates on the cotangent bundles, which we denote by
\begin{equation}\label{norm-coord}
(\rho,w,z; \xi,\eta_H,\eta_V) \text{ and } (\rho',w',z'; \xi', \eta_H', \eta_V') \text{ corresponding to } T^*X ,  \end{equation}
and 
\begin{equation}\label{theta-coord} 
(\rho,w,z; \mu, u, t) \text{ and }  (\rho' ,w', z';  \mu', u', t') \text{ corresponding to } {}^{\Theta}T^*X , \end{equation}
on the left and right copies of the respective cotangent bundles. We fix 
\[ V =\frac{\rho}{\rho'} \quad W = \frac{w-w'}{\rho'}, \quad Z = \frac{z-z'}{(\rho')^2} \]
as coordinates valid near the front face $\bhs{F}$, away from $\beta^\#(\{ \rho'=0 \})$. The map \eqref{double-ident} gives an identification between the 1-forms
\[  \frac{\mu}{\rho}d\rho - \frac{\mu'}{\rho'}d\rho' + \frac{u}{\rho}dw - \frac{u'}{\rho'}dw' + \frac{t}{\rho^2}dz - \frac{t'}{(\rho')^2}dz' , \] 
and 
\[\alpha\; dV + \wt \xi d\rho' + \beta dW + \wt\kappa dw' + \gamma dZ + \wt \eta dz' ,   \]
defined on ${}^{\Theta}T^*X\times {}^{\Theta}T^*X$ and $T^*(X\times_\Theta X)$ respectively. We will first determine how the coefficients of these 1-forms are related under the map \eqref{double-ident}, in this neighborhood of $\bhs{F}$. Since $\rho=V\rho',\; w=w'+\rho'W,\; z=z'+(\rho')^2Z$ we have 
\[ d\rho = Vd\rho' + \rho'dV , \quad dw = dw'+\rho'dW+Wd\rho', \quad dz = dz' + 2\rho' Z d\rho' + (\rho')^2 dZ  \]
and so the canonical 1-form in $T^*(X\times_\Theta X)$ is given by,
\begin{align*} 
 \left(\frac{\mu}{\rho}V - \frac{\mu'}{\rho'} + \frac{u}{\rho}W + \frac{2t\rho'}{\rho^2}Z\right)&d\rho' + \frac{\mu \rho'}{\rho}dV + \left(\frac{u}{\rho}-\frac{u'}{\rho'}\right)dw'  +  \frac{u\rho'}{\rho}dW  + \left(\frac{t}{\rho^2} - \frac{t'}{(\rho')^2}\right)dz' + \frac{t(\rho')^2}{\rho^2}dZ \\
& = \alpha\; dV + \wt \xi d\rho' + \beta dW + \wt\kappa dw' + \gamma dZ + \wt \eta dz' 
\end{align*}
where
\begin{align*}
&  \alpha = \mu \frac{\rho'}{\rho}, \quad \beta = u \frac{ \rho'}{\rho}, \quad \gamma = t \left(\frac{\rho'}{\rho} \right)^2, \\
&\quad \; \wt\xi = \frac{\mu}{\rho'} - \frac{\mu'}{\rho'} + \frac{u}{\rho'} \frac{w-w'}{\rho} + \frac{2t}{\rho'}\frac{z-z'}{\rho^2} , \quad \wt\kappa = \frac{u}{\rho} - \frac{u'}{\rho'}, \quad \wt\eta = \frac{t}{\rho^2} - \frac{t'}{(\rho')^2}.
\end{align*}

Now, using the fact that $\chi$ is a $\Theta$-canonical relation (and thus ${}^{{\Theta}}\alpha_X - \chi^*({}^{\Theta}\alpha_X)=0$), and the fact that $\chi$ restricts to the identity over $\pa X$. To determine $\text{Gr}(\chi)$ in the coordinates \eqref{theta-coord} we observe first that $\rho$ and $\rho^\prime$ are both bdfs on $X$ and thus conformal: $\rho'=f \rho$. Further, we have that $\pi_X:{}^{\Theta}T^*X\to X$, and $(0,w',z'):=(\pi_X\circ \chi)|_{\pa X}(x,\zeta)=(0,w,z)$, hence $w'=w+\rho A,\; z'=z+\rho^2 B$ for some smooth functions $A,B$ on ${}^{\Theta}T^*X$. Finally, we use the relation between the fundamental 1-forms to observe that
\begin{align*}  \mu \frac{d\rho}{\rho} &+ u\frac{dw}{\rho} + t \frac{dz}{\rho^2} = \chi^*\left( \mu' \frac{d\rho'}{\rho'} + u'\frac{dw'}{\rho'} + t'\frac{dz'}{(\rho')^2} \right) \\
& = \mu' \left(\frac{d\rho}{\rho} + \frac{df}{f} \right) + \frac{u'}{a}\left( \frac{dw}{\rho} + dA + A \frac{d\rho}{\rho}\right) + \frac{t'}{a^2}\left( \frac{dz}{\rho^2} + 2B \frac{d\rho}{\rho} + dB \right)  \\
& = \left(\mu' + \frac{u'}{f}A + \frac{2t'}{f^2} \right)\frac{d\rho}{\rho} + \frac{u'}{f}\frac{dw}{\rho} + \frac{2t'}{f^2} \frac{dz}{\rho^2} + \left(\frac{\mu'}{f}df + \frac{u'}{f}dA +\frac{t'}{f^2}dB\right).
\end{align*}
The final bracketed term will only contribute terms which are $\cO(\rho)$ or $\cO(\rho^2)$ after computing their $\Theta$-differential (e.g. $df = \rho \pa_\rho f \tfrac{d\rho}{\rho} + \rho \pa_{w}f \frac{dw}{\rho} +\rho^2 \pa_zf \frac{dz}{\rho^2}$), thus after grouping such terms we obtain
\[ \mu \frac{d\rho}{\rho} + u\frac{dw}{\rho} + t \frac{dz}{\rho^2} =  \left(\mu' + \frac{u'}{f}A + \frac{2t'}{f^2} +\rho C \right)\frac{d\rho}{\rho} + \left( \frac{u'}{f} +\rho D\right) \frac{dw}{\rho} + \left(\frac{2t'}{f^2} +\rho^2E \right) \frac{dz}{\rho^2} , \]
where $f>0, A,B,C,D$ are smooth functions of $(\rho, w,z,\mu,u,t)$. Taken together, these computations imply that its graph is of the form
\begin{align*} \text{Gr}(\chi) 
& = \{ ((\rho,w, z,\mu,u,t) , (\rho',w', z', \mu',u',t')) \; | \; \rho' = f \rho, \; w' = w+\rho A, \; z'=z + \rho^2 B \; \\
& \mu'=\mu - u A - 2t B +\rho C, \; u' =  f u + \rho D ,\; t' = f^2 t + \rho^2 E   \}  
\end{align*}
From this we can see that 
\[ \alpha = e^f \mu, \quad \beta = e^f u, \quad \gamma = e^{2f} t, \quad \wt\xi = - e^{-f} C , \quad \wt\kappa = -e^{-f}D, \quad \wt\eta = -e^{-2f}E . \]
Since $e^{f} = \tfrac{\rho'}{\rho}$ is smooth and positive on $\Lambda_\chi$, the map \eqref{double-ident} (defined over the interior $\text{Int}(X\times X)$), extends by continuity to the boundary when restricted to $\text{Gr}(\chi)$, thus defining $\Lambda_\chi$
\[ \text{Gr}(\chi) \simeq \Lambda_\chi \hookrightarrow T^*(X\times_\Theta X)  \]
as the image of $\text{Gr}(\chi)$ under the map \eqref{double-ident}. Further, this shows that $\Lambda_\chi$ intersects the boundary of $T^*(X\times_\Theta X)$ only over $\bhs{F}=\{\rho'=0\}$ and does so transversely. Thus it is an extendible Lagrangian, and we have by the previous lemma that this intersection $\Lambda_{\chi_\Theta}$ is a Lagrangian submanifold of $T^*\bhs{F}$. 
\end{proof}
This lemma elucidates the name liftable canonical transformation as they provide examples of canonical transformation with ``good" lifts to $T^*(X\times_\Theta X)$ as the associated Lagrangian meets the diagonal only in the front face $\bhs{F}$.

Given $p\in \CI({}^{\Theta}T^*X)$ we define its $\Theta$-Hamiltonian vector field by the relation ${}^{\Theta}\omega(-,{}^{\Theta}H_p)=dp$. In local coordinates in which ${}^{\Theta}\omega$ is given by \eqref{symp-form-coord}, ${}^{\Theta}H_p$ is given by

\begin{align*}
{}^{\Theta}H_p = & \rho \frac{\pa p}{\pa \mu}\pa_\rho - \left( \rho\frac{\pa p}{\pa \rho} +w_j\frac{\pa p}{\pa w_j} + 2t\frac{\pa p}{\pa t} \right) \pa_\mu \\
& + \sum_{j=1}^{2n} \left(\rho \frac{\pa p}{\pa u_j} \right) \pa_{w_j} - \left( \rho \frac{\pa p}{\pa w_j} - u_j\frac{\pa p}{\pa \mu} \right)\pa_{u_j} + \rho^2\frac{\pa p}{\pa t}\pa z -\left( \rho^2 \frac{\pa p}{\pa z} - 2t \frac{\pa p}{\pa \mu} \right) \pa_t
\end{align*}
And observe that this vector field has the special property that the projection of the vector field to the base vanishes when restricted to $\pa X$.

Because our focus is the wave equation we are most interested in the Hamiltonian associated to our ACH metric. Since our metric satisfies
\[ g_{ACH}=\frac{d\rho^2 + h_\Theta(w,z,dw, dz)+ \rho Q(\rho,w,z,dw, dz)}{\rho^2}  \]
we can conclude its dual metric on $T^*X$ has the form
\[ G = (\rho \xi)^2 + \rho^2 h_\Theta(w, z, \eta_H, \eta_V) +\rho^3 Q(\rho, w,z, \eta_H, \eta_V) 
\]
or in the coordinates $(\rho,w,z, \mu,u,t)$ on ${}^{\Theta}T^*X$ our dual metric is given by
\begin{equation}\label{symbol} G = \mu^2 + h_\Theta(w,z, u,t) + \rho Q(\rho,w,z,u,t).  \end{equation}
This function on ${}^{\Theta}T^*X$ will be the Hamiltonian of interest in our study of the wave equation.

\begin{lemma}{$\Theta$-canonical flowouts}

Let $G\in \CI({}^{\Theta}T^*X)$ be the dual metric associated to the metric $g_{ACH}$, and let ${}^{\Theta}H_G$ be its $\Theta$-Hamilton vector field. For all $s>0$, the canonical transformation $\chi_s: {}^{\Theta}T^*X \to {}^{\Theta}T^*X$, given as the flow-out of the Hamiltonian,
\[ \chi_s(q): = \emph{exp}(s\; {}^{\Theta}H_G)(q) ,  \]
is a liftable canonical transformation. Thus the graph of $\chi_s$ defines a smooth extendible Lagrangian submanifold of $T^*(X\times_\Theta X)$. Further, the intersection
\[ \Lambda_{\bhs{F}}(s): = \Lambda_s\cap T^*(X\times_\Theta X)|_{\bhs{F}}  \]
is a smooth Lagrangian submanifold of $T^*\bhs{F}$ given by 
\[ \emph{exp}(s\; H_{G_\Theta})(T^*\bhs{F} |_{D_\Theta\cap \bhs{F}} ) = \Lambda_{\bhs{F}}(s)  \]
where $G_\Theta=\wt G\rvert_{\bhs{F}}$, the restriction to the front face of the lift of $G$ to $T^*(X\times_\Theta X)$.
\end{lemma}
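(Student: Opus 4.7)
The plan is to verify the three assertions in turn, using Lemma \ref{lem:liftlagr} for the first two, and then identifying the front-face intersection by direct computation in the coordinates introduced in that lemma's proof.

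First I would show that $\chi_s$ is a liftable canonical transformation. It is automatically a symplectomorphism of $({}^\Theta T^*X, {}^\Theta\omega)$, since it is the time-$s$ flow of a $\Theta$-Hamilton vector field. The crucial observation is that in the explicit formula for ${}^\Theta H_G$ recorded just before the statement, each of the base-derivative components ($\pa_\rho, \pa_{w_j}, \pa_z$) carries an explicit factor of $\rho$ or $\rho^2$. Consequently ${}^\Theta H_G$ restricted to $\{\rho=0\}$ is tangent to the fibers of $\pi_X : {}^\Theta T^*X \to X$; the boundary $\pa X$ is preserved by the flow, and $\pi_X \circ \chi_s$ reduces to the identity on $\pa X$. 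Homogeneity of $\chi_s$ in the parabolic sense follows because the symbol $G$ in \eqref{symbol} is homogeneous of degree $2$ with respect to the fiber dilation $(\mu, u, t) \mapsto (\lambda\mu, \lambda u, \lambda^2 t)$, so ${}^\Theta H_G$ intertwines with this scaling.

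With $\chi_s$ identified as a liftable canonical transformation, Lemma \ref{lem:liftlagr} immediately produces the smooth extendible Lagrangian $\Lambda_s = \Lambda_{\chi_s} \subset T^*(X \times_\Theta X)$, which meets the boundary only over $\bhs{F}$, does so transversely, and whose intersection $\Lambda_{\bhs{F}}(s)$ is a Lagrangian submanifold of $T^*\bhs{F}$.

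It remains to identify $\Lambda_{\bhs{F}}(s)$ with the stated Hamiltonian flow-out. Using the projective coordinates $(V, W, Z)$ near $\bhs{F}$ from the proof of Lemma \ref{lem:liftlagr}, the lift $\wt G$ of $G$ to $T^*(X \times_\Theta X)$, when restricted to $\{\rho'=0\} = \bhs{F}$, collapses to $G_\Theta = \mu^2 + h_\Theta(w,z,u,t)$ because the remainder $\rho Q$ in \eqref{symbol} vanishes identically on the front face. At $s=0$, $\Lambda_0$ is the lifted diagonal $D_\Theta$, whose intersection with $T^*(X\times_\Theta X)\rvert_{\bhs{F}}$ is $T^*\bhs{F}\rvert_{D_\Theta \cap \bhs{F}}$. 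Since by the first step ${}^\Theta H_G$ preserves $\{\rho=0\}$, its flow-out $\Lambda_s$ restricts at the front face to the flow-out by the restriction of the lifted vector field, and a direct computation in the coordinates $(V, W, Z, \rho'; \alpha, \beta, \gamma, \wt\xi, \wt\kappa, \wt\eta)$ of Lemma \ref{lem:liftlagr} shows that this restriction coincides with $H_{G_\Theta}$ on $T^*\bhs{F}$, yielding the claimed identity.

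The main technical obstacle will be the final identification: verifying that the pushforward of ${}^\Theta H_G$ under $\varphi_\Theta$ of \eqref{double-ident}, restricted to the front face, is precisely $H_{G_\Theta}$ acting on $T^*\bhs{F}$. This requires tracking the inverse powers of $\rho$ in ${}^\Theta\omega$ from \eqref{symp-form-coord} against the factors of $\rho$ in ${}^\Theta H_G$, and confirming that the resulting cancellations produce the standard symplectic structure on $T^*\bhs{F}$ with Hamiltonian $G_\Theta$.
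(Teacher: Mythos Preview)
Your approach is essentially the same as the paper's: both establish liftability from the vanishing of the base components of ${}^\Theta H_G$ at $\rho=0$, invoke Lemma~\ref{lem:liftlagr} for the extendible Lagrangian, and then identify $\Lambda_{\bhs{F}}(s)$ by lifting $G$ to $T^*(X\times_\Theta X)$ in projective coordinates and restricting the Hamilton vector field to the front face.

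Two small points to tighten. First, your formula $G_\Theta = \mu^2 + h_\Theta(w,z,u,t)$ is written in the ${}^\Theta T^*X$ fiber coordinates, but $G_\Theta$ is by definition a function on $T^*\bhs{F}$; in the paper's computation (which uses the left-projective chart $V=\rho'/\rho$ rather than the right-projective chart of Lemma~\ref{lem:liftlagr}) it comes out as $G_\Theta = V^2\bigl(a_f^2 + h_\Theta(w,z,b_f,c_f)\bigr)$, and the factor $V^2$ matters for the Hamilton vector field. Second, the homogeneity needed for ``liftable'' is the ordinary conic $\bbR^+$-action on fibers (under which $G$, being a dual metric, is degree~$2$), not the parabolic dilation you invoke; the parabolic scaling does not make $G$ homogeneous since $t^2$ and $|u|^2$ scale differently under it.
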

\begin{proof}
Since the flow-out of a Hamilton vector field is always a canonical transformation, the first claim follows from the fact that only the projection onto the base vanishes. 
Thus we only to check the claim regarding $\Lambda_{\bhs{F}}(s)$. We can study the graph of $\chi_s$ after viewing $G\in \CI({}^{\Theta}T^*X\times {}^{\Theta}T^*X)$ as a function depending only on the second copy of ${}^{\Theta}T^*X$. 

On this space, we can write our canonical 1-form in the coordinates
\begin{align} \label{diff-form}
 \pi_1^*({}^{\Theta}\alpha) - \pi_2^*({}^{\Theta}\alpha) & : = \tfrac{\mu}{\rho}d\rho - \tfrac{\mu'}{\rho'}d\rho' + \tfrac{u}{\rho}dw - \tfrac{u'}{\rho'}dw' + \tfrac{t}{\rho^2} dz - \tfrac{t'}{(\rho')^2} dz' ,
\end{align}
thus we can write the Hamilton vector field of a function on this space with respect to this 1-form, with the same formula as we calculated above. In this case $\chi_s$ is the flow-out of the diagonal in ${}^{\Theta}T^*X\times {}^{\Theta}T^*X$ along the vector field ${}^{\Theta}H_G$. In these coordinates our length function is given by
\[  G = (\mu')^2 + h_\Theta(w',z', u',t') + \rho' Q(\rho',w' , z',u' ,t')  \]
and we can consider local coordinates near the front face, projective with respect to the left face:
\[ V = \frac{\rho'}{\rho}, \quad W = \frac{w' - w}{\rho} , \quad Z = \frac{z'-z}{\rho^2}  \]
with blow-down map
\begin{align*}
\beta: \; X\times_\Theta X \to X\times X \quad \quad (\rho,w,z, V, W, Z ) &  \mapsto (\rho,w,z, \rho',w',z') \\
& = (\rho,w,z,\rho V, w+\rho W, z+ \rho^2 Z ).
\end{align*}
The pullback of \eqref{diff-form} by $\beta$ is,
\[ ad\rho + a_fdV + b dw + b_f dW + cdz + c_fdZ,  \]
and in these coordinates we have that $\bhs{F}=\{\rho=0\}$ and the interior lift of the diagonal $D_\Theta$ is given by $D_\Theta=\{V=1, W=Z=0\}$. The lift of $p$ to $T^*(X\times_\Theta X)$ is given by
\begin{align*} \wt p &=  (a_fV)^2 + h_0(w+\rho W, z+\rho^2 Z, -V b_f , -V^2c_f) + \rho V Q(\rho V, w+\rho W, z+\rho^2 Z, -V b_f , -V^2c_f))  \\
& = (a_fV )^2 + V^2 h_0(w+ \rho W, z+\rho^2 W, b_f,c_f) + \rho V^3 Q(\rho V, w + \rho W,z+\rho^2 Z, b_f,c_f)
\end{align*}
where the functions $h_0, Q$ are $\cD_\rho^{\bbH}$-homogeneous of order 2 in the fiber variables.

Now we lift our symplectic form \eqref{difference form} to $T^*(X\times_\Theta X)$, and denote it by $\wt \omega$, ${}^{\Theta}H_G$ lifts to $H_{\wt G}$. In the coordinates 
\[ [(\rho,w,z, V,W,Z) \; ; (a,b,c,a_f, b_f,c_f)] \in T^*(X\times_\Theta X) 
 \]
our lifted Hamilton vector field has the form
\begin{align*}
H_{\wt G} & = \left(\frac{\pa \wt G}{\pa a}\pa_\rho - \frac{\pa \wt G}{\pa \rho}\pa_a\right) + \left( \frac{\pa \wt G}{\pa a_f}\pa_V - \frac{\pa \wt G}{\pa V}\pa_{a_f}  \right) \\
& + \sum_{j=1}^{2n} \left( \frac{\pa \wt G}{\pa b^j}\pa_{w_j} - \frac{\pa \wt G}{\pa w^j}\pa_{b_j} \right) + \sum_{j=1}^{2n} \left( \frac{\pa \wt G}{\pa b_f^j} \pa_{W_j} - \frac{\pa \wt G}{\pa W^j} \pa_{(b_f)_j} \right)  \\
&+ \left( \frac{\pa \wt G}{\pa c}\pa_{z} - \frac{\pa \wt G}{\pa z}\pa_{c} \right) + \left( \frac{\pa \wt G}{\pa c_f}\pa_{Z} - \frac{\pa \wt G}{\pa Z}\pa_{c_f} \right) \\
& = 2a_fV^2\pa_V -2V[a_f^2 + h_\Theta ]\pa_{a_f} - V^2\sum_j W_j \frac{\pa \wt G}{\pa w^j}  \pa_a - V^2 \sum_j \frac{\pa h_\Theta}{\pa w_j} \pa_{b_j} \\
&+ V^2 \sum_j \frac{\pa h_\Theta}{\pa b_f^j} \pa_{W_j} + V^2 \frac{\pa h_\Theta}{\pa c_f} \pa_Z   + \cO(\rho) ,
\end{align*}
thus $H_{\wt p}$ is smooth all the way down to $\bhs{F} = \{\rho=0\}$. Further, with respect to our coordinate transformation on $T^*(X\times_\Theta X)$ induced by the blow-down map $\beta$, we have that the diagonal 
\[  \{\rho=\rho',w=w',z=z',\mu=\mu',u=u', t=t'\} = ({}^{\Theta}T^*X)_{\diag} \subset {}^{\Theta}T^*X\times {}^{\Theta}T^*X \]
lifts to 
\[  \wt\cD_\Theta = \{ V=1, W=Z=a=b=c=0 \} \subset T^*(X\times_\Theta X). \]
Thus $\wt \cD_\Theta$ transversely intersects $T^*(X\times_\Theta X)|_{\bhs{F}}$ at $ \{ \rho=0,  V=1, W=Z=a=b=c=0 \}$. Finally we see that $H_{\wt p}$ projects down to $T^*\bhs{F}$ as 
\begin{align*} 2a_fV^2\pa_V & - 2V(a_f^2 + h_\Theta(w,z,b_f,c_f))\pa_{a_f} \\
& + 2V^2\sum_{i,j} h_\Theta^{ij}(w, z) \cdot b_f^{(j)} \frac{\pa}{\pa b_f^{(j)} }  +2V^2 h_\Theta^{0,0}(w, z) \cdot c_f \frac{\pa}{\pa c_f}  
\end{align*}
which is precisely the Hamilton vector field of $a_f^2 V^2 + V^2 h_\Theta(w,z,b_f ,c_f)$.
\end{proof}

\begin{remark}
Notice that because
\[ \wt G|_{\bhs{F}} = V^2(a_f^2 + h_\Theta(w,z,b_f,c_f)) =: G_\Theta  \]
the projection of the Hamilton vector field of $G$ to $T^*\bhs{F}$ is precisely the Hamilton vector field of the restriction of $G$ to $\bhs{F}$, with respect to the induced symplectic form on $T^*\bhs{F}$.
\end{remark}

In other words, for the Hamiltonian given by our length functional \eqref{symbol}, we have that: $\forall s>0$, the twisted graph of $\text{exp}(s\; {}^{\Theta}H_G): {}^{\Theta}T^*X\to {}^{\Theta}T^*X$ defines a Lagrangian submanifold $\Lambda(s) \subset T^*(X\times_\Theta X)$. Further this Lagrangian intersects the boundary only over $\bhs{F}$, and it does so transversely. The transversal intersection is itself a Lagrangian flow-out
\[ \Lambda_{F}(s):=\text{exp}(s\; H_{G_\Theta})(T^* \bhs{F}|_{D_\Theta \cap \bhs{F}}) \subset T^*\bhs{F} \]
which is the flow-out by the Hamilton vector field of $G_\Theta=\sigma_2( N_{\bhs{F}}(\Delta))$, the principal symbol of the normal operator at the front face. 

\subsection{$\Theta$-FIOs and the Wave Kernel}$ $\\

Here we construct the calculus of operators that our wave group $\cos(t\sqrt{\Delta - n^2/4})$ will lie in. These shall be restricted to the subclass of Lagrangian distributions whose support does not meet the left or right faces, $\beta^*(\pa X\times X)$, and $\beta^*(X\times \pa X)$ respectively. Due to the finite speed of propagation, initial data $U(t,p,p')$ supported in the interior of $\bhs{F}$ which evolves according to the wave equation,
\[ \begin{cases}  \left(D_t^2 + \Delta_g - \tfrac{n^2}{4}\right) U(t,p,p') = 0 \\ U(0,p,p') = \delta(p,p') ,\quad \pa_t U(0,p,p') = 0 \end{cases} \]
remains supported away from the left and right faces, $\bhs{L},\bhs{R}$. In particular, when considering our calculus of FIOs, we can ignore the complement of the front face in the corner, and restrict ourselves to Lagrangians which meet the boundary only at $\bhs{F}$. 

Since the canonical relation $C$ of the wave group will be a Lagrangian in $T^*\bbR\times T^*(X\times_\Theta X)$, we mildly extend our class of Lagrangians from the last section. The canonical 1-form on $T^*\bbR\times {}^{\Theta}T^*X\times {}^{\Theta}T^*X$ is given by
\[ \alpha = td\tau + \tfrac{\mu}{\rho}d\rho - \tfrac{\mu'}{\rho'}d\rho' + \tfrac{u}{\rho}dw - \tfrac{u'}{\rho'}dw' + \tfrac{s}{\rho^2} dz - \tfrac{s'}{(\rho')^2} dz.'  \]
With this 1-form, we can define a canonical relation
\[ C=\left\{ (t,\tau,\zeta_1,\zeta_2) \bigg|\, \tau + \sqrt{ G(\zeta_1,\zeta_1)} =0,  \; \zeta_2 = \text{exp}(t\; {}^{\Theta}H_G)(\zeta_1) \right\} \subset T^*\bbR\times {}^{\Theta}T^*X\times {}^{\Theta}T^*X, \]
and this canonical relation in turn defines a Lagrangian of $T^*\bbR \times T^*(X\times_\Theta X)$ given by
\[ \Lambda_C=\left\{ (t,\tau,\zeta_1,\zeta_2) \bigg|\, \tau + \sqrt{\wt G(\zeta_1,\zeta_2)} =0,  \; (\zeta_1,\zeta_2)\in \Lambda_t \right\} \subset T^*\bbR\times T^*(X\times_{\Theta}X) \]
where $\Lambda_t$ is an extendible Lagrangian associated to the graph of the liftable canonical transformation 
\[ \chi_t:=\text{exp}(t\; {}^{\Theta}H_G)(q): {}^{\Theta}T^*X\to {}^{\Theta}T^*X,\] 
and $\wt G$ is the lift of $G$ from the second copy of ${}^{\Theta}T^*X$. In particular, this Lagrangian intersects the boundary only over the front face $\bhs{F}$, and 
\[ \Lambda_C^\Theta = \left\{ (t,\tau, \bar\zeta_1, \bar\zeta_2)\bigg|\, \tau + \sqrt{G_\Theta(\bar\zeta_1,\bar\zeta_2)} =0,  \; (\bar\zeta_1,\bar\zeta_2)\in \Lambda_{\bhs{F}}(t) \right\} \subset T^*\bbR\times T^*\bhs{F}\]
where $G_\Theta$ is the restriction of $\wt G$ to $\bhs{F}$.


Now, given a liftable canonical transformation $\chi: {}^{\Theta}T^*X \to {}^{\Theta}T^*X$ we define our \textbf{$\Theta$-Fourier Integral Operators} associated to $\chi$ to be the linear operators $L: \cE'(X)\to \cD'(X)$ whose Schwartz kernels lie in the space of distributions
\begin{align*} 
I_\Theta^{m,s}(X; \chi, {}^{\Theta}\Omega^{1/2}) := \{& \rho_{F}^{s} \cK_L \big|\, \cK_L \in I^m(X\times_\Theta X; \Lambda_\chi, {}^{\Theta}\Omega^{1/2}), \; \rho_F^{s}\cK_L \text{ vanishes}\\
& \text{ in a neighborhood of }\pa(X\times_\Theta X)\setminus \bhs{F} \}  
\end{align*}
where $\Lambda_\chi$ is the extendible Lagrangian submanifold of $T^*(X\times_\Theta X)$ associated to $\chi$ by lemma \ref{lem:liftlagr}. Similarly, for the canonical relation $C$ defined above, we say that $\Theta$-Fourier Integral Operators associated to $C$ are the linear operators $B:\cE'(\bbR\times X)\to \cD'(X)$ whose Schwartz kernels lie in the space of distributions
\begin{align*} 
I_\Theta^{m,s}(\bbR\times X,X; C, {}^{\Theta}\Omega^{1/2}) := \{& \rho_{F}^{s}\cK_B \big|\, \cK_B \in I^m(\bbR\times X\times_\Theta X; \Lambda_C, {}^{\Theta}\Omega^{1/2}), \;  \rho_F^{s}\cK_B \text{ vanishes}\\
& \text{ in a neighborhood of }\pa(\bbR \times X\times_\Theta X)\setminus (\bbR\times \bhs{F}) \}  
\end{align*}
In both cases, such operators are those whose Schwartz kernels are Lagrangian distributions with respect to $\Lambda_\chi$, ($\Lambda_C$ resp.), and vanish to order $s$ at the front face $\bhs{F}$. Such operators carry two different principal symbol mappings: one is the usual symbol of a Lagrangian distribution, in the interior; the second operator is obtained by the principal symbol of the normal operator $\cK_L\rvert_{\bhs{F}}$ (resp. $\cK_{B}\rvert_{\bbR\times \bhs{F}}$) associated to the Lagrangian in $T^*\bhs{F}$ (resp. $T^*\bbR\times T^*\bhs{F}$). 

This second symbol is again the symbol of a Lagrangian distribution from the fact that our Lagrangian $\Lambda_\chi$ (resp. $\Lambda_C$) has transversal intersection with $T^*(X\times_\Theta X)\rvert_{\bhs{F}}$ (resp. $T^*\bbR\times T^*(X\times_\Theta X)\rvert_{\bhs{F}}$), thus  the restriction of  Lagrangian distribution to $\bhs{F}$ is again a Lagrangian distribution with respect to $\Lambda_\chi$ (resp. $\Lambda_C$). 

We now take a moment again to highlight the normal operator. If $\cK_A\in I_\Theta^{m,s}$, then $N_p(A) = (\rho_F^{-s} \cK_A)|_{\bhs{F}}$, and $N_p(A)$ is a Lagrangian distribution with respect to $\Lambda_\chi$ (resp. $\Lambda_C$). Further the normal operator satifies an analogue of the short exact sequence for principal symbols of operators:

\begin{proposition}\label{normal sequence}
The normal operator participates in a short exact sequence
\[ 0\to I_{\Theta}^{m,1}(\bbR\times X,X; C, {}^{\Theta}\Omega^{1/2}) \hookrightarrow I_{\Theta}^{m,0}(\bbR\times X,X; C, {}^{\Theta}\Omega^{1/2}) \xrightarrow{N_p(-)} I^m(\bbR\times \bhs{F}; \Lambda_C^\Theta, \Omega^{1/2}) \to 0 \]
such that for any $\Theta$-differential operator $P\in \emph{Diff}_\Theta^m(X)$ and any $\Theta$-\emph{Fourier integral operator} $B\in I_\Theta^{m,s}(\bbR\times X,X; C, {}^{\Theta}\Omega^{1/2})$ we have
\[ N_p((D_t^2-P)\circ B) = (D_t^2 - N_p(P))\ast N_p(B) \]
\end{proposition}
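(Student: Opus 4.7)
The plan is to verify the three ingredients of the short exact sequence and the multiplicativity formula in turn.

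First I would establish exactness in the middle. By construction, an element $B \in I_\Theta^{m,0}(\bbR \times X, X; C, {}^{\Theta}\Omega^{1/2})$ has Schwartz kernel $\cK_B$ that is Lagrangian with respect to $\Lambda_C$ and is supported away from $\pa(\bbR \times X \times_\Theta X) \setminus (\bbR \times \bhs{F})$. Since $\Lambda_C$ meets the boundary transversely at $T^*(X \times_\Theta X)|_{\bhs{F}}$ by Lemma \ref{lem:liftlagr}, the pullback $\cK_B|_{\bhs{F}}$ is a well-defined Lagrangian distribution in $I^m(\bbR \times \bhs{F}; \Lambda_C^\Theta, \Omega^{1/2})$, which confirms that $N_p$ is well-defined. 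Moreover $N_p(B) = 0$ is equivalent to $\cK_B$ vanishing at $\bhs{F}$, which is precisely the condition $\cK_B \in I_\Theta^{m,1}$, giving exactness at the middle term.

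Next, for surjectivity, I would construct a lift of a given $\kappa \in I^m(\bbR \times \bhs{F}; \Lambda_C^\Theta, \Omega^{1/2})$ by working locally in the projective coordinates $(\rho, w, z, V, W, Z)$ near a point of $\bhs{F} = \{\rho = 0\}$ introduced in Lemma \ref{lem:liftlagr}. The transversality of $\Lambda_C$ to $T^*_{\bhs{F}}$ furnishes a local product description of the Lagrangian $\Lambda_C \simeq (-\eps, \eps)_\rho \times \Lambda_C^\Theta$, so a local oscillatory-integral representation for $\kappa$ extends trivially in $\rho$ to a Lagrangian distribution with respect to $\Lambda_C$. Patching via a partition of unity and multiplying by a smooth cutoff supported away from $\bhs{L} \cup \bhs{R}$ then yields $B \in I_\Theta^{m,0}$ with $N_p(B) = \kappa$; the cutoff is permissible because $\kappa$ already has support in the interior of the front face.

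For multiplicativity, I would exploit that $P \in \Diff_\Theta^m(X)$ is built from iterated compositions of $\Theta$-vector fields, which lift to smooth vector fields on $X \times_\Theta X$ that are tangent to $\bhs{F}$. Consequently, acting with $D_t^2 - P$ on the first factor of $\cK_B$ preserves $I_\Theta^{m,0}$, and its restriction to $\bhs{F}$ acts as $D_t^2 - N_p(P)$, where $N_p(P)$ is a translation-invariant operator on the Heisenberg-type fibers of the front face. Since composition of kernels on the resolved space $X \times_\Theta X$ descends to convolution on these fibers via the group law \eqref{LAlg-group-law}, one obtains $N_p((D_t^2 - P) \circ B) = (D_t^2 - N_p(P)) \ast N_p(B)$.

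The main obstacle will be verifying rigorously that the lift of a $\Theta$-vector field to $X \times_\Theta X$ restricts on each fiber of $\bhs{F}$ to a left-invariant vector field on the corresponding Heisenberg group, in a manner compatible with the parabolic dilation used to define the $\Theta$-blow-up. This is the geometric content underlying the convolution structure of the normal operator; once it is checked on generators, linearity extends it to all of $\Diff_\Theta^m(X)$, and the multiplicativity follows from the compatibility of kernel composition with restriction to the front face.
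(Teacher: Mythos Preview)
Your proposal is correct and follows essentially the same strategy as the paper's proof, which itself is a brief sketch deferring to \cite{MazMel} and \cite{Jo-Sa2}. The only notable differences are stylistic: for surjectivity you extend a local oscillatory integral trivially in $\rho$ using the product structure $\Lambda_C \simeq (-\eps,\eps)_\rho \times \Lambda_C^\Theta$ furnished by transversality, whereas the paper phrases this as an appeal to Borel's lemma for the Taylor series at $\bhs{F}$; and for the composition formula you argue geometrically that $\Theta$-vector fields lift to vector fields tangent to $\bhs{F}$ whose restrictions are left-invariant on the group fibers, whereas the paper writes out the action of $P$ and $B$ explicitly in the projective coordinates $(V,W,Z)$ and reads off the convolution structure directly. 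Both routes lead to the same verification, and your identification of the ``main obstacle'' (compatibility of the lifted vector fields with the Heisenberg group law on fibers) is exactly the geometric content encoded in the paper's coordinate formula for $N_{\hat p}(P\circ B)$.
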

\begin{proof}
This is an analogue of \cite[prop 5.19]{MazMel}, and \cite[prop 3.1]{Jo-Sa2}. 

The injectivity portion of the statement of exactness is immediate from the definition. Since we have $N_{\hat{p}}(-)$ is $\CI$ in $\hat{p}\in \pa X$ and defines an operator on $\CI(\bhs{F_{\hat{p}}})$ for each $\hat p$ fixed. In particular, since the kernels of these operators are smooth up to the front face, it makes sense to consider their Taylor series on $\bhs{F_{\hat{p}}}$. The surjectivity of $N_{\hat{p}}(-)$ thus arises from a version of Borel's lemma for the Taylor series of $N_{\hat{p}}(-)$ in local coordinates for $\bhs{F_{\hat{p}}}$.

To prove the composition formula, we can use the structure of the Normal operator at $\bhs{F}$, and the fact that we are not blowing up in the $t$ variable, so it commutes with the normal operator. 

We observe first that such a $P\in \text{Diff}_{\Theta}^m(X)$ can be written with respect to our frame $\{\rho \pa_\rho, \rho V_{w_i}, \rho^2 \pa_z\}$ for ${}^{\Theta}TM$:
\begin{align*} &P=\sum_{j+|\alpha|+k\leq m} a_{j\alpha k}(\rho,w,z) (\rho\pa_\rho)^j (\rho V_w)^\alpha (\rho^2 \pa_z)^k \\ \implies & N_{\hat{p}}(P) = \sum_{j+|\alpha|+k\leq m} a_{j\alpha k}(0,w,z) (\rho\pa_\rho)^j (\rho V_w)^\alpha (\rho^2 \pa_z)^k. 
\end{align*}
As usual we choose to identify this as acting on $1/2$-densities: if we choose coordinates $(\rho,w,z)$, these induce a trivialization of the square root of the $\Theta$-density bundle $\Omega_\Theta^{1/2} = \Omega^{1/2}$ 
\[ \gamma= (\rho)^{-(2n+3)} \left| d\rho dw dz \right|^{1/2} \]
and $P$ acts on $f\in \CI(X; \Omega_\Theta^{1/2})$ by $Pf = P(f \gamma^{-1})\gamma$. Of course this is simply for $\Theta$-differential operators. More generally $\Theta$-FIOs will act on $1/2$-densities via their normal operator: $N_{\hat{p}}(A) = (\rho_F^{-s} \cK_A)|_{\bhs{F_{\hat{p}}}} $
\[ (B_tf)(\rho,w,z)\cdot \gamma = \int_{\bhs{F_p}} \cK_{B_t}(0,w, z;V,W,Z)f\left(\frac{\rho}{V},w-\frac{\rho}{V}W,z-\left(\frac{\rho}{V}\right)^2 Z\right) \frac{dV dWdZ}{V}\cdot \gamma  \]
In particular, this implies that the normal operator of $P\circ B$
\[ N_{\hat{p}}(P\circ B) = \left( \sum_{j+|\alpha|+k\leq m} a_{j\alpha k}(0,w,z) (\rho\pa_\rho)^j (\rho V_w)^\alpha (\rho^2 \pa_z)^k \circ \left( \cK_{B_t}(0,w, z;V,W,Z) \right)  \right)\cdot \gamma  \]
\end{proof}

Having proven this lemma, we arrive at a short time parametrix for the wave group.

\begin{proposition}\label{elmt-alg}$ $\\
For each $t\in \bbR$, for the canonical relation 
\begin{align*} C= \{  [ (t,\tau),& (\rho,w,z;  \mu,u,s) , (\rho',w',z'; \mu', u' ,s') ] : \\
& \tau + \sqrt{G (\rho,w,z; \mu,u,s)}=0 \; ; \; \;   (\rho',w',z'; \mu', u' ,s') =  \emph{exp}(t {}^{\Theta}H_G)(\rho,w,z;  \mu,u,s) \}  \end{align*}
the wave group $U(t)$ is $\Theta$-Fourier integral operator of the class
\[ U(t) = \cos\left(t\sqrt{\Delta_g - n^2/4}\right) \in I_\Theta^{-1/4,0}(\bbR\times X, X; C, {}^{\Theta}\Omega^{1/2} )  \]
\end{proposition}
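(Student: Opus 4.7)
The plan is to build the wave kernel via a geometric optics construction adapted to the $\Theta$-calculus, following the strategy of Joshi--S\'{a} Barreto. First, I would construct an initial parametrix $U_0(t)$ whose Schwartz kernel is a Lagrangian half-density on $T^*\bbR\times T^*(X\times_\Theta X)$ associated to the canonical relation $C$. The underlying Lagrangian $\Lambda_C$ is extendible and transverse to $\bhs{F}$ by the preceding $\Theta$-canonical flowouts lemma, so Lagrangian distributions associated to it make sense in the class $I_\Theta^{m,s}$. The leading half-density symbol of $U_0(t)$ is determined by solving the Hamilton--Jacobi equation $\tau^2 = G$ along $C$, while the order $-1/4$ is fixed by the usual geometric optics normalization together with the initial conditions $U_0(0) = \delta(p,p')$, $\pa_t U_0(0) = 0$ imposed as equalities of principal symbols.

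Next, I would compute the error $E_0(t) := (D_t^2 + \Delta_g - n^2/4)U_0(t)$ and verify via the transport-equation calculation that $E_0(t)\in I_\Theta^{-1/4, 1}(\bbR\times X, X; C, {}^{\Theta}\Omega^{1/2})$, that is, it vanishes one order higher at $\bhs{F}$ than a generic element of $I_\Theta^{-1/4, 0}$. To remove the error iteratively, I would invoke the short exact sequence of Proposition \ref{normal sequence}: at stage $k\geq 1$, I seek a correction $U_k(t)\in I_\Theta^{-1/4,k-1}$ whose normal operator satisfies
\begin{equation*}
(D_t^2 + N_p(\Delta_g - n^2/4))\, N_p(U_k) = -N_p(E_{k-1}), \qquad N_p(U_k)\rvert_{t=0} = 0, \quad \pa_t N_p(U_k)\rvert_{t=0} = 0
\end{equation*}
on $\bhs{F}$, and then lift back using the surjectivity of the sequence; this reduces the error by one order. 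The main obstacle is producing the required Lagrangian distribution $N_p(U_k)$ solving the model wave equation on the front face: $N_p(\Delta_g)$ is the Bergman Laplacian on the model fiber $\bhs{F}\rvert_p \simeq \Rp \rtimes_{\cD_\delta}H_n \simeq \bbH_\bbC^{n+1}$, and the solution must propagate along $\Lambda_{\bhs{F}}(t)$ under the Hamilton flow of $G_\Theta = V^2(a_f^2 + h_\Theta(w,z,b_f,c_f))$ from the previous lemma. Invariance of $G_\Theta$ under the Heisenberg group action allows this model Cauchy problem to be solved by Fourier analysis on $\Rp \rtimes_{\cD_\delta}H_n$, or equivalently via the explicit wave kernel of $\bbH_\bbC^{n+1}$, yielding $N_p(U_k)\in I^{-1/4}(\bbR\times \bhs{F}; \Lambda_C^\Theta, \Omega^{1/2})$.

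Finally, I would apply Borel's lemma to asymptotically sum the $U_k(t)$'s into a total parametrix $\wt U(t)\in I_\Theta^{-1/4,0}(\bbR\times X, X; C, {}^{\Theta}\Omega^{1/2})$ whose residual error vanishes to infinite order at $\bhs{F}$ and is therefore smoothing in the calculus. By finite propagation speed, $\wt U(t)$ remains supported away from $\bhs{L}\cup \bhs{R}$ on any bounded interval of $t$, so by uniqueness of the Cauchy problem on $L^2_g(X)$, the true wave group $U(t) = \cos(t\sqrt{\Delta_g - n^2/4})$ differs from $\wt U(t)$ only by a residual smoothing operator that is itself supported in the interior of $\bhs{F}$; this residual remains in $I_\Theta^{-1/4,0}$, and the proposition follows.
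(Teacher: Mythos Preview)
Your overall plan---iterate in the front-face order $s$ via the normal-operator short exact sequence, solving a model wave equation on $\bhs{F}$ at each stage, then Borel-sum---is the paper's strategy, and your description of stages $k\geq 1$ matches it.

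The gap is your Step~0. Solving Hamilton--Jacobi and transport equations for $U_0$ improves the \emph{Lagrangian} order $m$ of the error (the principal symbol of $D_t^2+\Delta_g-n^2/4$ vanishes on $\Lambda_C$, and the transport equation kills the subprincipal term), but it does \emph{not} force the error to vanish at $\bhs{F}$. Front-face gain means $N_p(E_0)=0$; by the composition formula of Proposition~\ref{normal sequence} this equals $(D_t^2+N_p(\Delta_g)-n^2/4)\,N_p(U_0)$, which is zero only if $N_p(U_0)$ already solves the model wave equation on the fiber---and a generic geometric-optics ansatz on $X\times_\Theta X$ does not arrange that. The paper therefore launches the induction differently: it chooses $u_0$ so that $N_{\hat p}(u_0)$ \emph{is} the exact model wave group on $\bhs{F_{\hat p}}$, constructed via Duistermaat--Guillemin as an interior FIO (legitimate because the data $\delta(0_{\hat p})$ and the flow-out $\Lambda_{\bhs{F}}(t)$ stay away from the corners of the fiber for finite $t$). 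Then $N_p(r_0)=0$ is immediate, and the front-face iteration starts. In short, what you do at stage $k\geq 1$ is also what should produce the initial approximation; the separate geometric-optics construction on the full space is unnecessary and does not deliver the $s$-gain you assert.

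A smaller point about your last paragraph: after Borel summation the residual vanishes to infinite order at $\bhs{F}$ but remains a Lagrangian distribution on $\Lambda_C$ in the interior---it is not ``smoothing in the calculus,'' and appealing to $L^2$-uniqueness does not by itself explain why $U(t)-\wt U(t)$ has the correct microlocal structure. The paper instead extends the $\cO(\rho_F^\infty)$ residual by zero across $\bhs{F}$ and removes it with H\"ormander's standard FIO parametrix on the doubled space, which keeps the final correction visibly inside $I_\Theta^{-1/4,0}$.
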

\begin{proof}
Given the normal sequence, the argument reduces to a purely local one: using proposition \ref{normal sequence}, and the fact that
\[ N_{\hat{p}}(\Id) = \delta(V-1)\delta(W)\delta(Z)\gamma = \delta(0_p)\gamma  \]
we can take as ansatz $U_0(t,\hat p)=N_{\hat{p}}(U(t))$ the wave group in this fiber $\bhs{F_{\hat{p}}}$: 
\[  \begin{cases}  \left(D_t^2 + N_{\hat{p}}(\Delta_g) - \tfrac{n^2}{4}\right) U_0(t,\hat p) = 0 \\ U(0,\hat p) = \delta(0_{\hat p}) ,\quad \pa_t U(0,\hat p) = 0 \end{cases}  \]
here $0_{\hat{p}}\in \bhs{F_{\hat{p}}} \simeq \bbX_{\hat{p}}$ corresponds to the identity element in the group. Note also that the specific form of the model Laplacian
\[ N_{\hat{p}}(\Delta_g) = -\tfrac{1}{4}(\rho\pa_\rho)^2 + \tfrac{n+1}{2}\rho\pa_\rho + \rho^2 \Delta_{H}(\hat{p}) - \rho^4Z^2(\hat{p})  \]
means we can also construct the model wave group, and study its asymptotics via analyzing those of the wave group in $\bbH_{\bbC}^{n+1}$.

Since $0_{\hat{p}}\in \text{Int}(\bhs{F_{\hat{p}}})$, it does not meet the corners of $\bhs{F_{\hat{p}}}$. Similarly $\Lambda_C$ does not meet the corners in finite time, so we can follow the argument of Duistermaat-Guillemin prop 1.1 to conclude $U_0(t)\in I^{-1/4}(\bbR\times \bhs{F}; \Lambda_C^\Theta, \Omega)$

Now we iterate. Choose a $u_0\in I_\Theta^{-1/4,0}(X\times \bbR, X; C, {}^{\Theta}\Omega^{1/2} )$ such that $N_{\hat{p}}(u_0)=U_0(t)$. Then 
\[ \beta_L^*(D_t^2+ \Delta_g - n^2/4)(U(t)-u_0) = r_0\in  I_\Theta^{-1/4,1}(X\times \bbR, X; C, {}^{\Theta}\Omega^{1/2} )  \]
and $\rho^{-1}r_0\in I^{-1/4,0}$ where $\rho$ is a defining function for the left face. (This is well-defined since $r_0$ is supported away from the left face, as $u_0$ was, and the wave operator preserves this support due to the condition on wave front of $U_0$, via \cite[Thm 2.5.15]{Hor3}). Now we solve the inhomogeneous wave equation to find a $u_1\in I_\Theta^{-1/4,0}$ solving
\[ \begin{cases}  \left(D_t^2 + N_{\hat{p}}(\Delta_g) - \tfrac{n^2}{4}\right) N_{\hat{p}}(u_1) = N_{\hat{p}}(\rho^{-1}r_0) \\ N_{\hat{p}}(u_1)|_{t=0} = N_{\hat{p}}(\rho^{-1}r_0) ,\quad \pa_t N_{\hat{p}}(u_1)|_{t=0} =  \pa_t N_{\hat{p}}(\rho^{-1}r_0)|_{t=0} \end{cases}   \]
solving as before we obtain such a $u_1$. We now have $\beta_L^*(D_t^2+ \Delta_g - n^2/4)(U(t)-u_0 - \rho u_1) = r_1\in I_\Theta^{-1/4,-2}$. 

Proceeding iteratively we obtain $U_\infty \sim \sum_{j\geq 0} \rho^j u_j$ such that $\beta_L^*(D_t^2+ \Delta_g - n^2/4)U_\infty$ vanishes to infinite order at $\bhs{F}$. The error term also has infinite order vanishing at $\bhs{F}$ in the Cauchy data from the construction. Finally, after extending this error term to be identically zero across the front face, we can use H\"{o}rmander's transverse intersection calculus to remove this error term (see e.g. \cite[Thm 2.5.15]{Hor3}).
\end{proof}

Unfortunately, this is a short time parametrix, as this construction is only valid for finite $t$. If we allow $t\to \infty$, our Lagrangian flow-out $\Lambda(t)$ will meet the corners of $\bhs{F}$, which would require a more sophisticated composition formula.

\section{Wave Trace Asymptotics}\label{sec:Wavetrace}

Now that we know the wave group is a $\Theta$-Fourier integral operator we can ask whether its trace can be studied, as in the case of the wave trace on a compact manifold without boundary. This presents some technical difficulties, since the operator $\cos\left(t\sqrt{\Delta_g-\tfrac{(n+1)^2}{4}}\right)$ is not trace class, so we need to introduce a regularization of its trace. 

Heuristically, our goal is to study the trace, 
\begin{equation}\label{eqn:wavetrace}
    \Tr U(t)=\int_{\bar X_{\diag}} U(t,x,x) \; d\Vol_g = \Pi_\ast \iota_{\diag}^*U  
\end{equation} 
using appropriate maps $\iota_{\diag}, \Pi$, to define this integral via pullback and pushforward. An analysis of the wavefront sets of these maps will permit an analysis of their associated operators, and prove that the resulting object is well-defined distribution on $\bbR$, with wavefront set to be determined. 

First, notice that for all $p,p^\prime\not\in \pa \bar X$, the restriction of $U(t,p,p^\prime)$ to the diagonal $\bar X_{\diag}$ is well-defined. To see this we proceed as in \cite[\S 1]{Du-Gu} by introducing the map,
\[ \iota_{\diag}:\bbR\times \bar X_{\diag} \to \bbR\times \bar X\times \bar X, \quad (t,p)\mapsto (t,p,p)  \]
of the inclusion of the diagonal. Pullback along this map is a Fourier integral operator of order $\tfrac{n+1}{2}$, defined by the canonical relation
\[ \wfp(\iota_{\diag}^*) = \left\{ \big(((t,\tau) , (p,\zeta+\zeta^\prime)\big) ,\; \big((t,\tau),(p,\zeta),(p,\zeta^\prime)) \big)  \right\} = N^*\{\iota_{\diag}(t,p)=(t,p,p^\prime)\} . \]
Now, using the fact that $\wf(U)=C$ (as defined in proposition \ref{elmt-alg}), assuming $p,p^\prime\not\in \pa \bar X$, then whenever $((t,\tau), (p,\zeta), (p,\zeta^\prime))\in \wfp(U)$ we have $\tau\neq 0$, thus $\wf(U)\cap N_{\iota_{\diag}}=\emptyset$ at such points (where $N_{\iota_{\diag}}=\{ (\iota(t,p),\tau,\zeta,\zeta^\prime)\in T^*(\bbR\times \bar X\times \bar X): D\iota_{\diag}^\intercal (\tau,\zeta,\zeta^\prime)=0 \}$ is the set of normals of the map). Thus we can apply \cite[Thm $2.5.11^\prime$]{Hor3} to conclude that $\iota_{\diag}^*U$ is a well-defined distribution on $\bbR\times (\bar X\setminus \pa\bar X)$ with wavefront set
\[ \wfp(\iota_{\diag}^*U) = \{ ((t,\tau),(p,\zeta-\zeta^\prime)): \tau+\sqrt{G(p,\zeta)}=0, \; (p,\zeta)=\text{exp}(t{}^{\Theta}H_G)(p,\zeta^\prime) \} .  \]
Duistermaat-Guillemin next study the wavefront set of the projection $\Pi: \bbR\times \bar X\to \bbR$. In our case we now introduce the regularization procedure. For $\eps>0$, define $\bar X_{\eps}=\{\rho>\eps\}$ for our bdf $\rho$. Consider the cutoff projection
\[ \Pi_\eps: \bbR\times \bar X_\eps\to \bbR, \quad (t,p)\mapsto t ,  \]
for which integration over the range $p$ is equal to the pushforward along $\Pi_\eps$ (the transpose of the operator $\Pi^*$). This map thus defines a Fourier integral operator of order $\tfrac12-\tfrac{n+1}{2}$ defined by the canonical relation
\[ \wfp(\Pi_*) = \left\{ \big((t,\tau),\; ((t,\tau),(x,0))\big) \right\} . \]
Again applying H\"{o}rmander's Theorem \cite[Thm $2.5.11^\prime$]{Hor3} we can conclude that the cutoff wave trace
\[ T_\eps(t) = \int_{\rho>\eps} U(t,p,p) = (\Pi_\eps)_*(\iota_{\diag}^*U(t)) \]
is a well-defined distribution on $\bbR$ satisfying
\[ \wf(T_\eps(t)) = \{ (t,\tau): \tau<0 \; \text{and }(p,\zeta)= \text{exp}(t\; {}^{\Theta}H_G)(p,\zeta^\prime) \text{ for some } (p,\zeta), \; \rho(p)>\eps \} .  \]
We obtain as a corollary
\begin{corollary}\label{singsupp-cor}
For $\eps>0$, the singular support of $T_\eps\in \cD^\prime(\bbR)$ is contained in the set of periods of closed geodesics in $\bar X_\eps$. Moreover, there exists $\eps_0>0$ such that all closed geodesics of $(X,g)$ with period greater than zero are contained in $\bar X_{\eps_0}$.

In particular for all $\eps<\eps_0$, the singular support of $T_\eps$ is contained in the set of period of closed geodesics of $X$.
\end{corollary}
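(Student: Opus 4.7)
The plan is to handle the two assertions of the corollary separately and then combine them.

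The first assertion follows essentially directly from the wavefront-set computation displayed immediately before the statement. If $(t,\tau) \in \wf(T_\eps)$, then by that description there exist a point $p \in \bar X_\eps$ and a covector $\zeta$ with $(p,\zeta) = \exp(t \cdot {}^\Theta H_G)(p,\zeta)$; that is, $(p,\zeta)$ is periodic of period $t$ for the $\Theta$-Hamiltonian flow of $G$. Projecting the orbit $s \mapsto \pi(\exp(s \cdot {}^\Theta H_G)(p,\zeta))$ to $X$ yields a closed geodesic of period $t$ passing through $p \in \bar X_\eps$, so $t$ lies in the period set of closed geodesics of $X$ that meet $\bar X_\eps$. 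This is a short verification.

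The substantive content is the second assertion: producing $\eps_0 > 0$ such that no non-trivial closed geodesic enters $\{\rho < \eps_0\}$. My approach would be a convexity argument for $\rho$ along geodesics. Starting from the formula for ${}^\Theta H_G$ with $G = \mu^2 + h_\Theta(w,z,u,t) + \rho Q(\rho,w,z,u,t)$ as in \eqref{symbol}, one finds $\dot\rho = 2\rho\mu$ along an orbit, so any critical point of $\rho$ has $\mu = 0$. On a unit cosphere this forces $h_\Theta + \rho Q = 1$, bounding the fibre variables $(u,t)$ uniformly via the positive definiteness of $h_\Theta$. Differentiating again and evaluating at $\mu = 0$ gives $\ddot\rho = 2\rho\,\dot\mu$, with $\dot\mu$ read off from the Hamilton vector field expression following \eqref{symp-form-coord}. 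The leading contribution as $\rho \to 0$ comes from the positive definite form $h_\Theta$ in the fibre variables, producing $\ddot\rho > 0$ once $\rho$ is sufficiently small, uniformly in the base point by compactness of $\pa\bar X$. Strict convexity of $\rho$ along geodesics throughout $\{\rho \leq \eps_0\}$ then precludes any closed geodesic from entering this neighbourhood, since at a closest approach to $\pa\bar X$ one would need $\ddot\rho \leq 0$.

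The main obstacle I anticipate is verifying that this convexity bound is uniform: one must check that the lower bound on $\ddot\rho|_{\mu=0}$ does not degenerate as $(u,t)$ ranges over the constraint surface $h_\Theta + \rho Q = 1$, and that the subleading term $\rho Q$ does not spoil positivity. This reduces to a direct calculation in the explicit Bergman-type model \eqref{Theta-metric} at the front face, combined with compactness of $\pa\bar X$, but it is where the actual work lies. With $\eps_0$ chosen so that the resulting positive lower bound holds for all $\rho \leq \eps_0$, the second assertion follows. Combining the two parts, for $\eps < \eps_0$ every closed geodesic of $(X,g)$ lies entirely in $\bar X_{\eps_0} \subset \bar X_\eps$, so the period set of closed geodesics meeting $\bar X_\eps$ coincides with that of $X$, and the first part of the corollary yields the stated containment.
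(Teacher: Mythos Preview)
Your approach is essentially the same as the paper's: the first assertion is read off from the wavefront-set computation, and the second is a convexity argument for $\rho$ along the geodesic flow, followed by the observation that a closed geodesic entering the collar would force a forbidden critical point of $\rho$.

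There is, however, a sign error in your convexity step that you should correct. With $\dot\rho = 2\rho\mu$ and $G = \mu^2 + h_\Theta + \rho Q$, the $\partial_\mu$-component of ${}^\Theta H_G$ (equivalently, the paper's computation of $\dot\sigma$ in the proof) gives $\dot\mu = -\bigl(2h_\Theta + O(\rho)\bigr)$ at $\mu = 0$, so that
\[
\ddot\rho\big|_{\mu=0} = 2\rho\,\dot\mu = -4\rho\,h_\Theta + O(\rho^2) < 0
\]
for $\rho$ small, not $>0$ as you state. Correspondingly, the closest approach of a closed geodesic to $\partial\bar X$ is a \emph{minimum} of $\rho$, where one needs $\ddot\rho \geq 0$; this is what contradicts $\ddot\rho < 0$. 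Your sentence ``at a closest approach to $\partial\bar X$ one would need $\ddot\rho \leq 0$'' has the inequality reversed. Once both signs are fixed the argument goes through exactly as in the paper, and your remarks about uniformity (positive-definiteness of $h_\Theta$ at $\rho=0$ together with compactness of $\partial\bar X$) are precisely what is used to choose $\eps_0$.
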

\begin{proof}
Only the claim regarding closed geodesics remaining in $\bar X_{\eps_0}$ remains to be proven. This is a statement about strict convexity of the geodesic flow in a neighborhood of infinity (see e.g. \cite[Proposition 4.1]{Jo-Sa2}, \cite[Lemma 4.1]{DaVa}). We show that if $\eps$ sufficiently small, any geodesic $\gamma$ which intersects $\{\rho<\eps\}$ cannot be closed. Introducing coordinates $(\rho, w, z)$ with corresponding dual coordinates $(\xi,\eta_H,\eta_V)$, such that $\rho$ is a boundary defining function for $\pa \bar X$. 

In these coordinates, we write the metric in a collar neighborhood of the boundary as 
\[ g = \frac{4d\rho^2 + \wt g_\rho}{\rho^2}, \quad \wt g_\rho = h_\sH+ \rho^{-2} \theta^2  \]
and we write 
\[ G_\rho(\eta,\eta) = h_\sH(\eta_H,\eta_H) + \rho^2 \theta^2(\eta_V,\eta_V)  \]
for the bilinear form on $T^*X$ induced by the dual metric of $\wt g_\rho$. In these coordinates the geodesic Hamiltonian is given by
\[ |\zeta|_g^2 = \sigma^2 + \bar G(\mu,\mu) = \sigma^2 + h_\sH(\mu_H,\mu_H) + \theta^2(\mu_V,\mu_V)  \]
where $\sigma=\rho\xi, \mu_H=\rho \eta_H, \mu_V=\rho^2 \eta_V$, and $\bar G=\rho^2 G_\rho$. The Hamilton vector field of this function is given by
\[ H_{|\zeta|_g^2} = \pa_\xi |\zeta|^2\pa_\rho - \pa_\rho |\zeta|^2\pa_\xi + (\pa_{\eta_H}|\zeta|^2)\cdot Y - (Y|\zeta|^2)\cdot \pa_{\eta_H} + (\pa_{\eta_V}|\zeta|^2)\pa_z - (\pa_z|\zeta|^2)\pa_{\eta_V}  \]
where $\{Y_j\}_{j=1}^{2n}$ is a local $h_\sH$-orthonormal frame dual to $\{d\eta_H^j\}_{j=1}^{2n}$. Computing the change in these vector fields with respect to the change of coordinates $(\rho, w, z,\xi,\eta_H,\eta_V)\mapsto (\rho, w, z, \sigma, \mu_H,\mu_V)$ gives
\[ \pa_\xi = \rho\pa_\sigma, \; \pa_{\eta_H}=\rho\pa_{\mu_H}, \; \pa_{\eta_V} = \rho^2 \pa_{\mu_V}, \]
\[ \pa_\rho = \pa_\rho + \rho^{-1}\sigma\pa_\sigma + \rho^{-1}(\mu_H \cdot \pa_{\mu_H}+2 \mu_V \pa_{\mu_V}) , \; Y=Y, \; \pa_z = \pa_z. \] 
Thus the Hamilton vector field can be re-expressed as
\begin{align*}
H_{|\zeta|^2} &= (\rho \pa_\sigma |\zeta|^2)(\pa_\rho + \rho^{-1}\cR_{CC}) - (\rho\pa_\rho + \cR_{CC})(|\zeta|^2)\pa_\sigma + \rho[(\pa_{\mu_H}|\zeta|^2)\cdot Y - (Y |\zeta|^2)\cdot \pa_{\mu_H}] \\
&\quad \quad + \rho^2[(\pa_{\mu_V}|\zeta|^2)\pa_z-(\pa_z|\zeta|^2)\pa_{\mu_V}]
\end{align*}
where we have defined $\cR_{CC}=\mu_H \cdot \pa_{\mu_H}+2 \mu_V \pa_{\mu_V}$, the infinitesimal generator of the Heisenberg dilation action on $T^*\pa \bar X$. Using the facts that
\[ \pa_\sigma |\zeta|^2 = 2\sigma, \quad \cR_{CC}|\zeta|^2 = 2\bar G(\mu,\mu),  \]
and writing the vector field $H_{\wt g_\rho}=[(\pa_{\mu_H}|\zeta|^2)\cdot Y - (Y |\zeta|^2)\cdot \pa_{\mu_H}] + \rho [(\pa_{\mu_V}|\zeta|^2)\pa_z-(\pa_z|\zeta|^2)\pa_{\mu_V}]$, we can re-express this formula as
\[ H_{|\zeta|^2} = 2\sigma \rho\pa_\rho + 2\sigma \cR_{CC} - (2\bar G(\mu,\mu)+ \rho\pa_\rho\bar G)\pa_\sigma + \rho\cdot H_{\wt g_\rho}.   \]
Thus, along integral curves of the vector field $H_{|\zeta|^2}$ we have $\dot\rho = 2\sigma \rho, \dot\tau = -(2\bar G+\rho\pa_\rho \bar G)$. Thus, at a critical point of $\rho$ along the flow which is an interior point of $X$ we have
\[ \dot\rho=0 \implies \sigma=0,  \]
hence at such points we have
\[ \ddot\rho = 0 + 2\dot\sigma \rho = -2\rho(2\bar G+ \rho\pa_\rho\bar G) = -4\rho\bar G - 2\rho^2 \pa_\rho \bar G.  \]
Now, using the fact that $\bar G|_{\rho = 0}$ is positive definite, thus for sufficiently small $\rho$ this quantity is negative. Thus we have shown that for all geodesic curves $\gamma$ which intersect $\{\rho\leq \eps\}$ satisfy,
\[ \dot\rho\circ \gamma =0 \implies \ddot\rho\circ \gamma < 0.  \]
Now, assuming for the sake of contradiction that $\gamma$ is closed. Then there exists $\delta \in (0,\eps)$ such that $\gamma$ intersects $\{\rho=\delta\}$ in at least two points. Therefore there exists a $s_0$ with $\rho\circ\gamma(s_0)>0$ where $\rho\circ\gamma$ has a minimum. However at such a minimum we have $\dot\rho\circ\gamma(s_0)=0$ and $\ddot\rho\circ\gamma(s_0)>0$, contradicting our convexity statement. 
\end{proof}

Using this corollary, we can now begin an analysis of the renormalized wave trace. If we denote by $u_j\in I_\Theta^{-1/4, j}(\bbR\times X,X;C, {}^{\Theta}\Omega^{1/2})$ be the operators defined in the proof of proposition \ref{elmt-alg}. The same arguments used above can be used to show that the distribution
\[ I_j(t,\eps) = \int_{\rho>\eps} \rho^j u_j(t,p,p)    \]
is well-defined, with singular support satisfying the conclusions of corollary \ref{singsupp-cor}. Since $\bhs{F}$ and $\Lambda_C$ intersect transversally, only the density factor implicit in this operator can obstruct the convergence of $I_j(t,\eps)$ as $\eps\to 0$. Since this density, a trivialization of the ${}^{\Theta}\Omega^{1/2}$-bundle, diverges at the rate $\rho^{-(2n+3)}$ at $\pa \bar X$, the integrals $I_j(t,\eps)$ converges for any $j\geq 2n+3$. Applying Taylor's Theorem to $u_j(t,p,p)$ as $\rho\to 0$, we see that there exists constants $C_j$ such that the limit
\[ {}^{R}\Tr U(t) = \lim_{\eps\to 0} \left[ \int_{\rho>\eps} U(t,p,p) - \sum_{j=-2n-2}^{-1} C_j \eps^j + C_0 \log(\tfrac{1}{\eps}) \right]  \]
exists, which we call the \emph{renormalized wave trace}. From corollary \ref{singsupp-cor}, we immediately obtain
\begin{proposition}
The singular support of ${}^{R}\Tr U(t)$ is contained in the set of periods of closed geodesics of $(X,g)$.
\end{proposition}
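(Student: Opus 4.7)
The plan is to deduce the proposition directly from Corollary \ref{singsupp-cor} by showing that the renormalization counterterms are smooth in $t$ on every interval disjoint from the period set. Fix $t_0\in\bbR$ not equal to the period of any closed geodesic of $(X,g)$, and choose a relatively compact open neighborhood $V\ni t_0$ which is also disjoint from the period set. By Corollary \ref{singsupp-cor}, for all $\eps<\eps_0$ the cutoff wave trace $T_\eps$ is a smooth function on $V$. The goal is to upgrade this to the renormalized trace ${}^{R}\Tr U(t)$ on $V$.

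For this I would analyze the counterterms explicitly. The parametrix construction in Proposition \ref{elmt-alg} expresses $U$ as an asymptotic series $U\sim\sum_{j\geq 0}\rho^j u_j$ with $u_j\in I_\Theta^{-1/4,j}(\bbR\times X,X;C,{}^{\Theta}\Omega^{1/2})$. Restricted to the diagonal and paired against the $\Theta$-half-density factor, each $u_j(t,p,p)$ is a distribution in $t$ whose wavefront set, by exactly the $\iota_{\diag}^*$ and $(\Pi_\eps)_*$ wavefront calculus used to derive $\wf(T_\eps)$, is contained in the period set of closed geodesics of $\bar X_\eps$. In particular $u_j(t,p,p)$ is smooth in $(t,p)\in V\times \bar X$ away from $\rho=0$, and admits a Taylor expansion at $\pa\bar X$ with coefficients in $C^\infty(V\times \pa\bar X)$.

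Combining this Taylor expansion with the $\rho^{-(2n+3)}$ weight of the ${}^{\Theta}\Omega^{1/2}$-trivialization, the coefficients $C_j$ and $C_0$ appearing in the renormalization are, on $V$, smooth functions of $t$ of the form
\[
C_j(t)=\int_{\pa\bar X}\!\! a_j(t,\hat p)\,d\nu(\hat p),\qquad C_0(t)=\int_{\pa\bar X}\!\! a_0(t,\hat p)\,d\nu(\hat p),
\]
with $a_j,a_0\in C^\infty(V\times\pa\bar X)$ obtained from the finite list of Taylor coefficients of the $u_j(t,p,p)$. Thus for every $\eps>0$ and every $t\in V$,
\[
F_\eps(t):=\int_{\rho>\eps} U(t,p,p)-\sum_{j=-2n-2}^{-1}C_j(t)\eps^j-C_0(t)\log(1/\eps)
\]
is a smooth function of $t\in V$. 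By the very definition of ${}^R\Tr U(t)$, the family $F_\eps$ converges as $\eps\to 0$ to ${}^R\Tr U(t)$.

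To conclude that ${}^{R}\Tr U$ is smooth on $V$, rather than merely a pointwise limit, I would show the convergence $F_\eps\to {}^{R}\Tr U$ holds in $C^\infty(V)$. For any $k\in\bbN_0$, the derivative $\pa_t^k U$ is again a $\Theta$-Fourier integral operator for the same canonical relation $C$, so Corollary \ref{singsupp-cor} applies with $U$ replaced by $\pa_t^k U$ and $T_\eps$ by $\pa_t^k T_\eps$. Exchanging $\pa_t^k$ with $\int_{\rho>\eps}$ (valid on $V$ since the integrand is smooth in $t$ for each $\eps>0$) and with the Taylor expansion at $\pa\bar X$ (valid since the coefficients $a_j,a_0$ are smooth in $t$), the same construction produces the renormalization of $\pa_t^k T_\eps$ converging to $\pa_t^k\,{}^R\Tr U$ uniformly on compact subsets of $V$. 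Hence $F_\eps\to{}^R\Tr U$ in $C^\infty(V)$, so ${}^R\Tr U\in C^\infty(V)$, and as $t_0$ was arbitrary outside the period set the proposition follows.

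The main obstacle is the last step: ensuring that differentiation in $t$ commutes with both the finite-part renormalization in $\rho$ and the cutoff integration over $\{\rho>\eps\}$, so that one indeed obtains $C^\infty(V)$ convergence. This reduces to checking that the Taylor expansion at $\pa\bar X$ of $u_j(t,p,p)$ is locally uniform in $t\in V$ together with its $t$-derivatives, which in turn follows from $u_j\in I_\Theta^{-1/4,j}$ and the fact that the diagonal does not meet $\bhs{L}\cup\bhs{R}$ over $V$.
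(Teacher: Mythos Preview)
Your proposal is correct and follows essentially the same route as the paper. The paper's proof is literally the single sentence ``From corollary \ref{singsupp-cor}, we immediately obtain'', relying on the preceding paragraph (the $I_j(t,\eps)$ have singular support controlled by the corollary, they converge for $j\geq 2n+3$, and Taylor's theorem at $\rho=0$ supplies the counterterms); you have simply unpacked those implicit steps, in particular making explicit that the counterterms are smooth in $t$ off the period set and that the convergence is in $C^\infty$.
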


Finally, we can begin our analysis of the renormalized wave trace as $t\to 0$ (in fact its inverse Fourier transform). First we choose a cutoff function $\chi\in \CI_c(\bbR)$, with the appropriate support to study the transform of the cutoff wave trace. If we denote the first non-zero period of a closed geodesic on $(X,g)$ as $t_0$, then choose $\chi$ such that $\chi(t)=1$ for $|t|>\tfrac{t_0}{2}$ and $\chi(t)\equiv 0$ for $|t|>\tfrac{2t_0}{3}$.

Now, using the arguments of \cite{Hor1}, (which are purely local, applying to any paracompact manifold), or alternatively the proof of \cite[Prop 2.1]{Du-Gu}, we immediately obtain
\begin{proposition}
There exists coefficients $\{w_k\}_{k\in \bbN_0}\subset \bbR$ such that the cutoff wave trace $T_\eps(t)$ satisfies,
\begin{equation}\label{wave-trace-short-time}
\int_\bbR T_\eps(t) \chi(t) e^{t\mu}  dt \sim \frac{1}{(2\pi)^{2n+2}} \sum_{k=0}^\infty w_k \mu^{2n+2-2k} ,  \end{equation}
as $\mu\to 0$ and rapidly decaying as $\mu\to -\infty$. The leading term, $\omega_0=\Vol_g(\bar X_\eps)$
\end{proposition}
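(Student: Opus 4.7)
The strategy is to reduce to the classical short-time wave trace expansion of H\"ormander and Duistermaat--Guillemin, which applies because $\bar X_\eps \Subset X \setminus \pa \bar X$ is relatively compact and the $\Theta$-structure is trivial there.

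The first step is to note that, by the support hypotheses on $\chi$ and Corollary \ref{singsupp-cor}, $\chi(t) T_\eps(t)$ is a compactly supported distribution on $\bbR$ whose singular support is $\{0\}$. Moreover, the wavefront calculation carried out just above shows that $\wf(T_\eps)$ is contained in $\{(t,\tau): \tau<0\}$ near the origin, so its Fourier--Laplace transform $\int \chi(t) T_\eps(t) e^{t\mu}\, dt$ extends holomorphically into a half-plane in $\mu$, yielding the claimed rapid decay as $\mu \to -\infty$.

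For the asymptotic expansion, I would next realize $\chi(t) T_\eps(t)$, modulo $\CIc(\bbR)$, as a conormal distribution at $t=0$. Because the blow-down $\beta$ is a diffeomorphism away from $\pa \bar X$ and the ${}^{\Theta}\Omega^{1/2}$-trivialization differs from the Riemannian half-density on $\bar X_\eps$ only by a smooth, strictly positive factor, the $\Theta$-FIO $U(t)$ restricted to a neighborhood of $\bar X_{\eps,\diag}$ reduces to an ordinary classical Fourier integral operator of order $-1/4$ with canonical relation $C$. Applying H\"ormander's standard computation to $(\Pi_\eps)_*\, \iota_{\diag}^*\, U$ then produces an oscillatory-integral representation
\[
\chi(t) T_\eps(t) \equiv \chi(t) \cdot \frac{1}{(2\pi)^{2n+2}} \int_\bbR e^{it\tau}\, a_\eps(\tau)\, d\tau \pmod{\CIc(\bbR)},
\]
where $a_\eps$ is a classical symbol on $\bbR$, and only even powers of $\tau$ appear in its expansion $a_\eps(\tau) \sim \sum_{k \geq 0} \alpha_k \tau^{2n+1-2k}$, reflecting that $\cos\bigl(t\sqrt{\Delta_g-(n+1)^2/4}\bigr)$ is an even function of $t$.

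The leading coefficient $\alpha_0 = w_0$ I would identify by inspecting the principal symbol of $\iota_{\diag}^*\, U(t)$ at $t=0$: the standard transport-equation computation on Riemannian manifolds shows it is proportional to the Riemannian volume density on $\bar X_{\diag}$, and pushforward along $\Pi_\eps$ integrates this density over $\bar X_\eps$, giving $w_0 = \Vol_g(\bar X_\eps)$ after absorbing normalizations into the $(2\pi)^{-(2n+2)}$ prefactor. Substituting the oscillatory integral into $\int \chi(t) T_\eps(t) e^{t\mu}\, dt$ and computing term-by-term the Fourier--Laplace transform of each homogeneous piece $\tau^{2n+1-2k}$ then yields the claimed series $\sum_{k \geq 0} w_k\, \mu^{2n+2-2k}$.

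The main obstacle I anticipate is the bookkeeping comparing the $\Theta$-half-density trivialization, which blows up like $\rho^{-(2n+3)}$ at $\pa \bar X$, with the Riemannian half-density on $\bar X_\eps$; once one checks that these agree up to a smooth, strictly positive factor on the compact set $\{\rho > \eps\}$, the classical symbol computations apply unchanged, and the rest of the argument is purely local and independent of the asymptotic geometry at infinity.
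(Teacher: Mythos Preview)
Your proposal is correct and follows essentially the same approach as the paper: the paper simply invokes the arguments of \cite{Hor1} (or equivalently \cite[Prop.~2.1]{Du-Gu}), observing that these are purely local and hence apply verbatim on the relatively compact set $\bar X_\eps$, which is exactly the reduction you spell out in more detail. Your added remarks about matching the $\Theta$-half-density with the Riemannian half-density on $\{\rho>\eps\}$ and the evenness of the symbol are correct elaborations of what the paper leaves implicit.
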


Given this result for the asymtotics of the cutoff wave trace $T_\eps(t)$ we can then conclude similarly for the full wave trace \ref{eqn:wavetrace} that
\begin{theorem}\label{main}
There exists coefficients $\{\omega_k\}_{k\in \bbN_0}\subset \bbR$ such that the renormalized trace ${}^{R}\Tr U(t)$ satisfies,
\begin{equation*}
\int_\bbR {}^{R}\Tr U(t) \chi(t) e^{t\mu}  dt \sim \frac{1}{(2\pi)^{2n+2}} \sum_{k=0}^\infty \omega_k \mu^{2n+2-2k} , \end{equation*}
as $\mu\to 0$ and rapidly decaying as $\mu\to -\infty$. The leading term, $\omega_0={}^{R}\Vol_g(X)$, is called the renormalized volume, and can be computed as
\begin{equation}
{}^{R}\Vol_g(X) = \lim_{\eps\to 0} \left[ \int_{\{\rho>\eps\}} d\Vol_g - \sum_{j=-2n-2}^{-1} d_j \eps^j - d_0\log(\tfrac{1}{\eps}) \right] ,  
\end{equation}
where the $d_j$ are the unique real numbers such that this limit exists. 
\end{theorem}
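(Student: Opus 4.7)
The plan is to deduce the expansion for the renormalized wave trace directly from the expansion \eqref{wave-trace-short-time} of the cutoff wave trace established in the preceding proposition, by taking $\eps \to 0$ and showing that the Hadamard subtractions in the definition of ${}^R\Tr U(t)$ are exactly those needed to cancel the divergent parts of the coefficients $w_k(\eps)$.

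First I would record that by definition
\[ {}^R\Tr U(t) = \lim_{\eps \to 0}\left[T_\eps(t) - \sum_{j=-2n-2}^{-1}C_j(t)\,\eps^j + C_0(t)\log(1/\eps)\right], \]
where the $C_j(t)$ arise from the Taylor expansion at $\rho=0$ of $\sum_j \rho^j u_j(t,p,p)$ obtained by restricting the parametrix of Proposition \ref{elmt-alg} to the diagonal. By Corollary \ref{singsupp-cor}, each $C_j(t)$ is smooth on the support of $\chi$, which was chosen to avoid the non-zero periods of closed geodesics, so that
\[ \hat C_j(\mu) := \int_\bbR C_j(t)\chi(t) e^{t\mu}\,dt \]
is smooth in $\mu$ and rapidly decaying as $\mu \to -\infty$. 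Integrating the defining relation against $\chi(t)e^{t\mu}$ then yields
\begin{align*}
\int_\bbR {}^R\Tr U(t)\chi(t)e^{t\mu}\,dt &= \lim_{\eps\to 0}\bigg[ \int_\bbR T_\eps(t)\chi(t) e^{t\mu}\,dt \\
&\quad - \sum_{j=-2n-2}^{-1}\hat C_j(\mu)\,\eps^j + \hat C_0(\mu)\log(1/\eps)\bigg].
\end{align*}

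Next I would substitute the expansion \eqref{wave-trace-short-time} into the first term on the right-hand side. Since each coefficient $w_k(\eps)$ is produced by the same stationary-phase construction on the diagonal of $\bar X_\eps$ that furnishes the Taylor coefficients $C_j(t)$, I expect that $w_k(\eps)$ admits a Hadamard expansion of the form $w_k(\eps) = \sum_{j<0}d_{k,j}\eps^j + d_{k,0}\log(1/\eps) + \omega_k + o_\eps(1)$ whose divergent part is carried by the transforms $\hat C_j(\mu)$. Matching the $\eps^j$ and $\log(1/\eps)$ coefficients in the bracketed expression, these divergent contributions cancel, and one is left with the finite renormalized coefficients $\omega_k$ multiplying $\mu^{2n+2-2k}$, together with the rapid decay as $\mu \to -\infty$ inherited from both $T_\eps$ and the $\hat C_j$. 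For $k=0$, the identification $w_0(\eps) = \Vol_g(\bar X_\eps)$ from the preceding proposition then yields $\omega_0 = {}^R\Vol_g(X)$ in the form \eqref{renorm-vol}.

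The main obstacle will be justifying the commutation of the $\eps\to 0$ limit with the $\mu$-asymptotic expansion. Concretely, I would need to prove that the expansion \eqref{wave-trace-short-time} is uniform in $\eps$ as $\eps \to 0$, so that the divergent Hadamard pieces of $w_k(\eps)$ and the counterterms $\hat C_j(\mu)\eps^j$ can be regrouped term by term. I expect to achieve this through a uniform stationary-phase argument on the Lagrangian distribution $\iota_{\diag}^*U(t)$, exploiting the transversality of $\Lambda_C$ with $T^*(X\times_\Theta X)\rvert_{\bhs{F}}$ established in the canonical flow-out lemma; this transversality guarantees that the amplitude of the parametrix has a smooth Taylor expansion in $\rho$ at the front face, so that the same expansion simultaneously controls the $\eps$-dependence of $w_k(\eps)$ and the counterterms in a compatible way.
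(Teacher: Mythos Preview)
Your proposal is correct and follows essentially the same route as the paper: the paper states Theorem~\ref{main} as an immediate consequence of the preceding proposition on the cutoff trace, with no further proof, and your plan---pass from $T_\eps$ to ${}^R\Tr U(t)$ by Hadamard subtraction and match divergent parts coefficient by coefficient---is precisely the argument being invoked. You in fact go further than the paper by isolating the one genuine technical point, namely the uniformity in $\eps$ of the expansion \eqref{wave-trace-short-time} needed to interchange the $\eps\to 0$ limit with the $\mu$-asymptotics, and your proposed justification via the transversality of $\Lambda_C$ at $\bhs{F}$ is the right mechanism.
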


\nocite{*}

\end{document}